\newtheorem{theorem}{Theorem}[section]
\newtheorem{corollary}[theorem]{Corollary}
\newtheorem{lemma}[theorem]{Lemma}
\newtheorem{lemma and definition}[theorem]{Lemma and Definition}
\newtheorem{proposition}[theorem]{Proposition}
\newtheorem{definition}[theorem]{Definition}
\newtheorem{exam}[theorem]{Example}
\newtheorem{remark}[theorem]{Remark}
\newtheorem{the construction}[theorem]{THE CONSTRUCTION}
\newtheorem{conjecture}[theorem]{Conjecture}
\newcommand{\field}[1]{\mathbb{#1}}
\newcommand{\Z }{\field{Z}}
\newcommand{\N }{\field{N}}
\DeclareMathOperator{\supp}{Supp}
\DeclareMathOperator{\kerr}{Ker}
\DeclareMathOperator{\rf}{RF}
\DeclareMathOperator{\ap}{Ap}
\DeclareMathOperator{\pf}{PF}
\DeclareMathOperator{\ed}{e}
\DeclareMathOperator{\m}{m}
\DeclareMathOperator{\maxx}{Maximals}
\DeclareMathOperator{\co}{C}
\DeclareMathOperator{\fr}{F}
\DeclareMathOperator{\ty}{t}
\begin{document}
	
\title[]{Row-Factorization Matrices in Arf Numerical Semigroups and Defining Ideals}
\author{Meral SÜER}
\address{Department of Mathematics, Faculty of Science and Letters, Batman University, Batman, Turkey}
\email{meral.suer@batman.edu.tr}
\author{Mehmet YEŞİL}
\address{Department of Mathematics, Faculty of Science and Letters, Batman University, Batman, Turkey}
\email{mehmet-yesil@outlook.com}
\keywords{Arf numerical semigroups, row-factorization matrices, numerical semigroup rings, Arf rings}
\subjclass[2020]{20M14,20M20,20M25,13H10}
\maketitle

\begin{abstract}
In this paper, we investigate the row-factorization matrices of Arf numerical semigroups, and we provide the full list of such matrices of certain Arf numerical semigroups. We use the information of row-factorization matrices to detect the generic nature and to find generators of the defining ideals.
\end{abstract}

\maketitle

\section{Introduction}
Numerical semigroups and numerical semigroup rings are significant concepts that have many applications in ring theory, algebraic geometry and combinatorics. To understand the structure of a numerical semigroup, one must consider various notions, including the Frobenius number, pseudo-Frobenius numbers, multiplicity, embedding dimension, type and so on. In \cite{am}, A. Moscariello has introduced a new concept called  row-factorization matrices of pseudo-Frobenius numbers to investigate the type of almost symmetric numerical semigroups of embedding dimension four. Row-factorization matrices can be used for exploring many significant properties of the defining ideals. For instance, they can be used for detecting the generic nature of defining ideals of numerical semigroup rings. The connection between numerical semigroup rings and row-factorization matrices is further studied by J. Herzog, K. Watanabe in \cite{HW} and independently by K. Eto in \cite{eto3}, \cite{eto2} and \cite{eto1}.

Throughout this paper, $\Z,\N$ and $\N_{0}$ will denote the set of integers, positive integers and non-negative integers, respectively. A subset $S \subseteq \N_{0}$ containing $0$ is called a numerical semigroup if it is closed under addition and has finite complement in $\N_{0}$. Let $S$ be a numerical semigroup minimally generated by the set $\{ n_{1},n_{2},\dots,n_{e} \}$, $\Bbbk$ be a field and $t$ be an indeterminate. The subring $\Bbbk[S]=\Bbbk[t^{s}\mid s\in S]$ of the polynomial ring $\Bbbk[t]$ is called the semigroup ring of $S$ over $\Bbbk$. There is a surjective ring homomorphism from the polynomial ring $R=\Bbbk[x_{1},\dots,x_{e}]$ to $\Bbbk[S]$ mapping $x_{i}$ to $t^{n_{i}}$ whose kernel is denoted by $I_{S}$. We can represent $\Bbbk[S]$ as a quotient ring of $R$ with $R/I_{S}$, and we call $I_{S}$ the defining ideal of $\Bbbk[S]$. The ideal $I_{S}$ is a binomial prime ideal whose binomials correspond to the pairs of factorizations of elements in $S$ into the minimal generators. Binomial prime ideals are called toric ideals and the ring $\Bbbk[S]$ is called a toric ring.

A toric ideal is called generic if it has a minimal generating set consisting of binomials with full support. The genericity for toric ideals is introduced by I. Peeva and B. Sturmfels in \cite{PS} and they give the minimal free resolution of a toric ring when the toric ideal is generic. In \cite{eto1}, K. Eto gives an explicit relation between the genericity of a toric ideal and row-factorization matrices. He proves that the defining ideal of $\Bbbk[S]$ is not generic if $S$ is an almost symmetric numerical semigroup with embedding dimension bigger than three. Recently, in \cite{OM}, O.P. Bhardwaj, K. Goel and I. Sengupta give descriptions of possible row-factorization matrices of $S$ when it is minimally generated by an almost arithmetic sequence. They use these descriptions to find generators of the defining ideal when $S$ is also symmetric and has embedding dimension four or five.

In this paper, we consider Arf numerical semigroups with certain multiplicities. Those numerical semigroups are explicitly parametrized in \cite{ghkr} by P.A. Garc\'ia-S\'anches, B.A. Heredia, H.\.I. Karaka\c{s} and J.C. Rosales. We use the characterizations of Arf numerical semigroups from \cite{ghkr} to find pseudo-Frobenius numbers and we give the full list of row-factorization matrices of Arf numerical semigroups with multiplicity up to five. We also find the row-factorization matrices of certain Arf numerical semigroups whose conductor is a multiple of the multiplicity. Furthermore, we characterize the generic nature of defining ideals of the ring $\Bbbk[S]$ when $S$ is an Arf numerical semigroup and we find the generators of those ideals in some cases.

This paper is organised as follows. In Section \ref{sec2}, we gather necessary background of numerical semigroups, Arf numerical semigroups and row-factorization matrices that we use in latter sections. In Section \ref{sec3}, we provide a full list of row-factorization matrices of Arf numerical semigroups with multiplicities up to five. We conclude that the Frobenius numbers of these numerical semigroups have at least one row-factorization matrix, and the absolute value of its determinant is the Frobenius number itself. In Section \ref{sec4}, we give parametrizations of row-factorization matrices of certain Arf numerical semigroups whose conductor is a multiple of the multiplicity (Proposition \ref{p10}). We show that the Frobenius number of those semigroups has a row-factorization matrix whose determinant's absolute value is the Frobenius number, Corollary \ref{cor2}. We also show in Lemma \ref{lm} that for an Arf numerical semigroup with multiplicity bigger than five, the row-factorization matrices of the Frobenius number have at least one column with similar entries. In Section \ref{sec5}, we use the information that we gathered in previous sections to detect the generic nature and find generators of defining ideals of numerical semigroup rings obtained by Arf numerical semigroups. In particular, we prove the main results Theorems \ref{thm1}, \ref{thm2}, \ref{thm3}, \ref{thm4} of this paper and interpret the results with Arf rings.

\section{Preliminaries}\label{sec2}

Let $S$ be a numerical semigroup. The smallest non-zero element in $S$ is called the multiplicity of $S$ and denoted by $\m(S)$. The elements in $\N\setminus S$ are called the gaps of $S$, and the greatest gap is called the Frobenius number of $S$ and denoted by $\fr(S)$. In particular, if $S=\N$, $\fr(S)=-1$. The number $\fr(S)+1$ is called the conductor of $S$ and denoted by $\co(S)$.

Let $A\subset S$. We say that $S$ is generated by $A$ and write $S=\langle A \rangle$ whenever $S=\{ \sum_{i=1}^{e}\lambda_{i}n_{i} \mid n_{i}\in A,\lambda_{i}\in\N_{0},e\in \N \}$. It is well known that every numerical semigroup is finitely generated, i.e. the set $A$ is always finite. If $S$ is generated by $A$ and there is no proper subset of $A$ that generates $S$, then we say that $S$ is minimally generated by $A$, and in this case the cardinality of $A$ is called the embedding dimension of $S$, and denoted by $\ed(S)$.

Let $n\in S$ be a non-zero element. We define the Ap\'ery set of $S$ with respect to $n$, denoted by $\ap(S,n)$, as follows $$\ap(S,n)=\{ s\in S\mid s-n \notin S \}.$$ It is well known that $$\ap(S,n)=\{ \omega(0)=0,\omega(1),\dots,\omega(n-1) \}$$ where $\omega(i)=\min\{ s\in S \mid s \mod n = i \}$ for each $i\in\{ 1,\dots,n-1\}$.

Let $z\in\Z$. We say that $z$ is a pseudo-Frobenius number of $S$ if $z\notin S$ and $z+s\in S$ for all non-zero element $s\in S$. The set of pseudo-Frobenius numbers of $S$ is denoted by $\pf(S)$, and the cardinality of $\pf(S)$ is called the type of $S$, denoted by $\ty(S)$. For $z,w\in\Z$, we say that $z\leq_{S}w$ if $w-z\in S$, which defines a partially ordered relation. For a subset $A$ of $S$, $\maxx_{\leq_S}(A)$ will denote the maximal elements of $A$ with respect to the relation $\leq_{S}$.  Next proposition gives a characterization of $\pf(S)$ in terms of the Ap\'ery sets of $S$.

\begin{proposition} \label{1}\cite[Proposition 2.20]{RG}
	Let $S$ be numerical semigroup and $n\neq0$ an element of $S$. Then \[\pf(S)=\{ w-n \mid w\in \maxx_{\leq S}(\ap(S,n)) \}.\]
\end{proposition}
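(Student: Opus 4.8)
The plan is to show that the assignment $w \mapsto w-n$ restricts to a bijection between $\maxx_{\leq_S}(\ap(S,n))$ and $\pf(S)$, and I would establish this by proving the two resulting inclusions separately. The one structural fact I would invoke repeatedly is that every $x \in S$ can be written \emph{uniquely} as $x = w + kn$ with $w \in \ap(S,n)$ and $k \in \N_{0}$: one takes $w = \omega(i)$ where $i = x \bmod n$, and then $x - w$ is a non-negative multiple of $n$ because $\omega(i)$ is the least element of $S$ congruent to $i$ modulo $n$; moreover $x - w = kn \in S$. This non-negativity is the key point that makes the case analysis below work.

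For the inclusion $\{\, w-n \mid w \in \maxx_{\leq_S}(\ap(S,n))\,\} \subseteq \pf(S)$, I would fix a $\leq_S$-maximal element $w$ of $\ap(S,n)$. Since $w \in \ap(S,n)$, the definition of the Ap\'ery set gives $w - n \notin S$. Then for an arbitrary non-zero $s \in S$, I write $w + s = w' + kn$ in its unique Ap\'ery form as above. If $k \geq 1$, then $(w-n) + s = w' + (k-1)n \in S$, as desired. If $k = 0$, then $w + s = w' \in \ap(S,n)$, so $w \leq_S w'$; maximality of $w$ then forces $w = w'$, hence $s = 0$, contradicting $s \neq 0$. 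Therefore $k \geq 1$ always, $(w-n)+s \in S$ for every non-zero $s \in S$, and so $w - n \in \pf(S)$.

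For the reverse inclusion, I would start with $z \in \pf(S)$ and set $w := z + n$. Because $n$ is a non-zero element of $S$, the pseudo-Frobenius property of $z$ gives $w = z + n \in S$, while $w - n = z \notin S$; hence $w \in \ap(S,n)$. To check that $w$ is $\leq_S$-maximal in $\ap(S,n)$, suppose $w \leq_S w'$ with $w' \in \ap(S,n)$, say $w' = w + s$ with $s \in S$. If $s \neq 0$, then $w' - n = z + s \in S$ by the pseudo-Frobenius property of $z$, contradicting $w' \in \ap(S,n)$; so $s = 0$ and $w' = w$. Thus $w \in \maxx_{\leq_S}(\ap(S,n))$ and $z = w - n$ belongs to the set on the right-hand side, which completes the proof.

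The argument is essentially bookkeeping with the definitions, and the only place demanding care is the case split on the Ap\'ery coordinate $k$: one must use that $k \geq 0$ (so that $w' + (k-1)n$ is still a legitimate element of $S$ when $k \geq 1$) and that the degenerate case $k = 0$ is genuinely excluded by maximality. Both hinge on the minimality built into the definition of $\omega(i)$, so I would state that observation cleanly at the outset and then let the two inclusions fall out mechanically.
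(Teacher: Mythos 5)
Your proof is correct: the Ap\'ery decomposition $x = w + kn$ with $k \ge 0$ is exactly the right tool, and both inclusions are handled completely, including the check that $w-n\notin S$ and the exclusion of the $k=0$ case by maximality. The paper itself offers no proof of this proposition --- it is quoted from \cite[Proposition 2.20]{RG} --- and your argument is essentially the standard one from that source, phrased via the explicit Ap\'ery coordinate $k$ rather than the equivalent observation that $w+s\notin\ap(S,n)$ immediately forces $w+s-n\in S$.
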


It is also well known that the embedding dimension of $S$ is less than or equal to the multiplicity of $S$, i.e. $\ed(S) \leq \m(S)$. When $\ed(S)=\m(S)$, we say that $S$ has maximal embedding dimension, in short $S$ is a MED-numerical semigroup. Next proposition gives an exact characterization of MED property in terms of the minimal generators and 
the Ap\'ery sets.

\begin{proposition} \label{2}\cite[Proposition 3.1]{RG}
	Let $S$ be numerical semigroup minimally generated by $\{ n_{1}<n_{2}<\dots<n_{e}\}$. Then $S$ has maximal embedding dimension if and only if $\ap(S,n_{1})=\{ 0,n_{2},\dots,n_{e} \}$.
\end{proposition}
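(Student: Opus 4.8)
The plan is to reduce both implications to two elementary facts. The first is already recorded in the excerpt: for any nonzero $n\in S$ the Apéry set $\ap(S,n)=\{\omega(0),\omega(1),\dots,\omega(n-1)\}$ consists of exactly $n$ elements, one in each residue class modulo $n$. The second is the standard observation that a minimal generator of $S$ cannot be written as a sum of two nonzero elements of $S$. Throughout, recall that $n_{1}=\m(S)$ and $e=\ed(S)$.

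For the forward implication, I would assume $S$ has maximal embedding dimension, i.e. $e=n_{1}$, and first show that $n_{i}\in\ap(S,n_{1})$ for every $i$ with $2\le i\le e$. Indeed, if $n_{i}-n_{1}$ were an element of $S$, it would be nonzero (as $n_{1}<n_{i}$), so $n_{i}=n_{1}+(n_{i}-n_{1})$ would exhibit $n_{i}$ as a sum of two nonzero elements of $S$, contradicting minimality of the generating set; hence $n_{i}-n_{1}\notin S$, i.e. $n_{i}\in\ap(S,n_{1})$. Together with $0=\omega(0)\in\ap(S,n_{1})$ this gives the inclusion $\{0,n_{2},\dots,n_{e}\}\subseteq\ap(S,n_{1})$. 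Since the $n_{i}$ are pairwise distinct and positive, the left-hand set has exactly $e=n_{1}$ elements, which is also the cardinality of $\ap(S,n_{1})$; therefore the inclusion is an equality, as claimed.

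For the converse, suppose $\ap(S,n_{1})=\{0,n_{2},\dots,n_{e}\}$. The right-hand side consists of $e$ pairwise distinct elements, so it has cardinality $e$, while the left-hand side has cardinality $n_{1}$. Hence $e=n_{1}$, that is $\ed(S)=\m(S)$, so $S$ has maximal embedding dimension.

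The argument is short, and there is no serious obstacle; the only point needing care is the cardinality bookkeeping — invoking $|\ap(S,n_{1})|=n_{1}$ on one side and checking that the displayed set of generators genuinely has $e$ elements on the other — combined with the non-decomposability of minimal generators, which is exactly what places $n_{2},\dots,n_{e}$ inside the Apéry set.
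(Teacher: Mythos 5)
Your argument is correct: the inclusion $\{0,n_{2},\dots,n_{e}\}\subseteq\ap(S,n_{1})$ always holds by non-decomposability of minimal generators, and both implications then follow from the cardinality identity $|\ap(S,n_{1})|=n_{1}$ versus $|\{0,n_{2},\dots,n_{e}\}|=e$. The paper itself gives no proof (it cites \cite[Proposition 3.1]{RG}), and your counting argument is essentially the standard one found there, so there is nothing to flag.
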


A numerical semigroup $S$ is Arf if for every $s_{1}, s_{2}, s_{3}\in S$ with $s_{1}\leq s_{2}\leq s_{3}$ we have the property that $s_{2}+s_{3}-s_{1}\in S$. This is equivalent to prove that for every $s_{1}, s_{2}\in S$ with $s_{1}\leq s_{2}$ we have $2s_{2}-s_{1}\in S$. It can easily be seen from the Arf property that Arf numerical semigroups have maximal embedding dimension but the inverse statement is not true always. By Propositions \ref{1} and \ref{2}, it is immediate to see that for an Arf numerical semigroup $S$ with multiplicity $\m(S)=m$, the set $(\ap(S,m)\cup\{m\})\setminus\{0\}=\{ m,\omega(1),\dots,\omega(m-1) \}$ forms the minimal generating system for $S$. Moreover, if $S$ is minimally generated by $\{ n_{1}<n_{2}<\dots<n_{e}\}$, then $$\pf(S)=\{ n_{2}-n_{1},n_{3}-n_{1},\dots,n_{e}-n_{1} \}$$ and $\ty(S)=\m(S)-1$. That is, the number of pseudo-Frobenius elements of an Arf numerical semigroup is one less than the multiplicity. Next proposition gives explicit parametrizations of Arf numerical semigroups with a given multiplicity up to five and conductor. 

\begin{proposition} \cite[Section 7]{ghkr} \label{arf}
	Let $S$ be an Arf numerical semigroup with multiplicity $\m(S)=m$ and conductor $\co(S)=s$. Then
	\begin{enumerate}
		\item if $m=1$ then $S=\N$,
		\item if $m=2$ then $s$ is a positive even integer and $S=\langle 2,s+1 \rangle$,
		\item if $m=3$, $s\geq 3$ and $s \equiv 0\mod 3$ then $S=\langle 3,s+1,s+2\rangle$,
		\item if $m=3$, $s> 3$ and $s \equiv 2\mod 3$ then $S=\langle 3,s,s+2\rangle$,
		\item if $m=4$, $s\geq 4$ and $s \equiv 0\mod 4$ then $S=\langle 4,4k+2,s+1,s+3\rangle$ for some $k\in \{ 1,\dots, \frac{s}{4} \}$,
		\item if $m=4$, $s> 4$ and $s \equiv 2\mod 4$ then $S=\langle 4,4k+2,s+1,s+3\rangle$ for some $k\in \{ 1,\dots, \frac{s-2}{4} \}$,
		\item if $m=4$, $s> 4$ and $s \equiv 3\mod 4$ then $S=\langle 4,s,s+2,s+3\rangle$,
		\item if $m=5$, $s\geq 5$ and $s \equiv 0\mod 5$ then either $S=\langle 5,s-2,s+1,s+2,s+4\rangle$ or $S=\langle 5,s+1,s+2,s+3,s+4\rangle$,
		\item if $m=5$, $s>5$ and $s \equiv 2\mod 5$ then $S=\langle 5,s,s+1,s+2,s+4\rangle$,
		\item if $m=5$, $s> 5$ and $s \equiv 3\mod 5$ then $S=\langle 5,s,s+1,s+3,s+4\rangle$,
		\item if $m=5$, $s> 5$ and $s \equiv 4\mod 5$ then either $S=\langle 5,s-2,s,s+2,s+4\rangle$ or $S=\langle 5,s,s+2,s+3,s+4\rangle$.
	\end{enumerate}
\end{proposition}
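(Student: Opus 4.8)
The plan is to prove the classification by a double induction — an outer one on the multiplicity $m$ and, for each fixed $m$, an inner one on the conductor $s$ — organised around the blow-up construction for Arf semigroups. For a numerical semigroup $S$ with $\m(S)=m$, set $B(S):=\{0\}\cup\{x-m\mid x\in S,\ x\geq m\}$. A short argument from the Arf condition shows that $S$ is Arf if and only if $S=\{0\}\cup(m+T)$ for some Arf numerical semigroup $T$ with $m\in T$; in that case $T=B(S)$ is again Arf, one has $\fr(B(S))=\fr(S)-m$, hence $\co(S)=\co(T)+m$ (with $\co(\N)=0$), and $\ap(S,m)=\{0\}\cup\bigl((\ap(B(S),m)\setminus\{0\})+m\bigr)$. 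In particular, by Proposition~\ref{2}, the minimal generators of $S$ are obtained from those of $T$ by adding $m$. Thus classifying the Arf semigroups of multiplicity $m$ and conductor $s$ reduces to listing all Arf numerical semigroups $T$ with $m\in T$ and $\co(T)=s-m$, and translating each back through $S=\{0\}\cup(m+T)$.

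The enumeration of the admissible $T$ is where the two inductions feed in. Since $m\in T$ forces $\m(T)\leq m$, the semigroup $T$ is $\N$, or an Arf semigroup of multiplicity $<m$ (classified by the outer induction), or an Arf semigroup of multiplicity exactly $m$ with the strictly smaller conductor $s-m$ (classified by the inner induction); in each case one only has to impose the extra condition $m\in T$, which for $m\leq5$ is decided at once from the parametrisations of smaller multiplicity — e.g.\ for $m=4$ it holds for $\N$, for every $\langle 2,2j+1\rangle$, and for every multiplicity-$4$ Arf semigroup, but among multiplicity-$3$ ones only for $\langle 3,4,5\rangle$. Running through the residue classes of $s$ modulo $m$ and, within each, through the finitely many admissible $T$ with $\co(T)=s-m$, one recovers the lists $(1)$--$(11)$ term by term. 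For instance, in case $(5)$ the unique multiplicity-$\leq2$ semigroup $T$ with $\co(T)=s-4$, together with the multiplicity-$4$ Arf semigroups of conductor $s-4$ supplied by the inner induction, account one-for-one for the parameters $k\in\{1,\dots,s/4\}$; likewise the two alternatives listed in $(8)$ and $(11)$ correspond to the two admissible $T$. The base cases anchor everything: $m=1$ forces $S=\N$, while for $m=2$ the condition $2\in T$ holds for every Arf $T$ with $\m(T)\leq2$, so $T$ ranges over $\N$ and the semigroups $\langle 2,2j+1\rangle$, giving $S=\langle 2,s+1\rangle$ for every positive even $s$.

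The bulk of the proof is then the residue-by-residue bookkeeping, which is finite and routine. Two points genuinely need care: (a) the appearance, in cases $(5)$--$(6)$, of a one-parameter family rather than a single semigroup — this traces back to the fact that there are infinitely many Arf semigroups $T$ of multiplicity $\leq2$ with $4\in T$, whose conductors interleave with those of the multiplicity-$4$ examples generated along the blow-up chain, so that $k$ essentially measures the length of that chain; and (b) the small-conductor edge cases, where a generator listed in the proposition would fall at or below the multiplicity and the corresponding line degenerates — for example, for $s=5$ only the second semigroup of $(8)$ is genuine. I expect (a), namely pinning down the exact range $\{1,\dots,s/4\}$ (respectively $\{1,\dots,(s-2)/4\}$) of $k$ from the combinatorics of the blow-up chain, to be the main obstacle; everything else is bounded finite case analysis.
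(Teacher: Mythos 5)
The paper does not actually prove this proposition: it is imported verbatim from \cite[Section 7]{ghkr}, so there is no internal argument to compare against. Your blow-up induction is a sound way to reconstruct it, and it is essentially the mechanism behind the cited parametrization (Arf semigroups are governed by their multiplicity/Lipman sequences): the equivalence $S$ Arf $\iff S=\{0\}\cup(m+T)$ with $T$ Arf and $m\in T$ checks out, as do $\co(S)=\co(T)+m$ and the Ap\'ery identity $\ap(S,m)\setminus\{0\}=m+\bigl(\ap(T,m)\setminus\{0\}\bigr)$, and I verified that the residue-by-residue bookkeeping reproduces the lists (for instance, in case $(5)$ the value $k=1$ is supplied exactly by the admissible $T$ with $\m(T)\leq 2$, and each blow-down of a multiplicity-$4$ semigroup shifts $k\mapsto k+1$, giving precisely $\{1,\dots,s/4\}$; the two families in $(8)$ and $(11)$ likewise match the two admissible $T$). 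One sentence needs repair rather than being a real gap: ``the minimal generators of $S$ are obtained from those of $T$ by adding $m$'' is only true when $\m(T)=m$. When $\m(T)<m$ it fails --- e.g.\ $T=\langle 2,3\rangle$ and $m=4$ give $S=\langle 4,6,7,9\rangle$, whereas the generators of $T$ shifted by $4$ are $6,7$ --- and you must instead apply Proposition \ref{2} to $S$ alone via your Ap\'ery identity, which your enumeration of the low-multiplicity $T$ already does implicitly. The edge cases you flag (the degenerate generator $s-2<m$ forcing only the second alternative of $(8)$ at $s=5$, and the exact upper end of the $k$-range) are indeed the only places where the finite case analysis requires care.
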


We now recall the notion of a row-factorization matrix, RF-matrix in short, of a numerical semigroup $S$ minimally generated by $\{ n_{1}<n_{2}<\dots<n_{e} \}$.

\begin{definition}
	Let $f\in\pf(S)$. An $e\times e$ matrix $[a_{ij}]$ is an RF-matrix of $f$ if $a_{ii}=-1$, $a_{ij}\in\N_{0}$ for $i\neq j$ and $f=\sum_{j=1}^{e}a_{ij}n_{j}$ for each $i\in\{ 1,2,\dots,e\}$.
\end{definition}

\section{Row-Factorization Matrices of Arf Numerical Semigroups with Multiplicity up to 5}\label{sec3}

In this section, we provide complete characterizations of the row-factorization matrices of Arf numerical semigroups with multiplicity smaller than six. We will use Propositions \ref{2} and \ref{arf} repeatedly without referring. 

\begin{proposition} \label{p1}
	Let $S$ be an Arf numerical semigroup with multiplicity $2$ and a positive even conductor $s$. Then $S=\langle 2,s+1 \rangle$, $\pf(S)=\{ s-1 \}$ and \[ \rf(s-1)= \begin{bmatrix}
		-1&1\\
		s&-1 
	\end{bmatrix}. \]
\end{proposition}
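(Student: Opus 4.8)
The plan is to verify each of the three assertions directly from the definitions and the parametrization given in Proposition \ref{arf}. First I would observe that the claim $S=\langle 2, s+1\rangle$ is simply case (2) of Proposition \ref{arf}, so nothing needs to be done there beyond recording that $n_1 = 2$ and $n_2 = s+1$ are the minimal generators; note $s+1$ is odd since $s$ is even, so these two elements are indeed incomparable and form a minimal generating set with $\ed(S) = 2 = \m(S)$.

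Next I would compute $\pf(S)$. Since $S$ is Arf (or, more elementarily, since $S$ has maximal embedding dimension), the formula recalled just before the definition of the RF-matrix gives $\pf(S) = \{n_2 - n_1\} = \{(s+1) - 2\} = \{s-1\}$, and $\ty(S) = \m(S) - 1 = 1$. Alternatively one can compute $\ap(S,2) = \{0, s+1\}$ directly (every even number $\geq 0$ is in $S$, and $s+1$ is the least odd element of $S$ because $s-1 \notin S$ as $s-1$ is an odd gap), so by Proposition \ref{1}, $\pf(S) = \{(s+1) - 2\} = \{s-1\}$. One should also note $s-1 = \fr(S)$, consistent with the conductor being $s$.

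Finally I would exhibit the RF-matrix of $f = s-1$. A $2\times 2$ RF-matrix $[a_{ij}]$ must have $a_{11} = a_{22} = -1$, nonnegative off-diagonal entries, and satisfy $s-1 = a_{11}n_1 + a_{12}n_2 = -2 + a_{12}(s+1)$ and $s-1 = a_{21}n_1 + a_{22}n_2 = 2a_{21} - (s+1)$. From the first equation, $a_{12}(s+1) = s+1$, forcing $a_{12} = 1$; from the second, $2a_{21} = 2s$, forcing $a_{21} = s$. Hence the RF-matrix is uniquely determined and equals $\begin{bmatrix} -1 & 1 \\ s & -1 \end{bmatrix}$, as claimed.

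There is essentially no obstacle here: the embedding dimension is two, so each row of the matrix is forced once the diagonal is fixed, and the only thing to check is that the resulting off-diagonal entries are nonnegative integers, which is immediate. The one point worth stating carefully is the justification that $\ap(S,2) = \{0, s+1\}$ (equivalently that $s+1$ is the smallest odd element of $S$), which follows because $s$ is the conductor and $s-1$ is odd, hence a gap.
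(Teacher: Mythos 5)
Your proposal is correct and follows essentially the same route as the paper: identify $S=\langle 2,s+1\rangle$ from the parametrization, compute $\ap(S,2)=\{0,s+1\}$ to get $\pf(S)=\{s-1\}$, and solve the two linear equations for the off-diagonal entries $a_{12}=1$, $a_{21}=s$. The extra care you take in justifying that $s+1$ is the least odd element of $S$ is a welcome addition but does not change the argument.
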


\begin{proof}
	If $S$ is an Arf numerical semigroup with multiplicity $2$ whose conductor $s$ is a positive even integer, we know that $S= \langle 2,s+1 \rangle$. Therefore, $\ap(S,2)=\{ 0,s+1 \}$, and so $\pf(S)=\{ s-1 \}$. In order to find an RF-matrix of $s-1$, one needs to solve by definition the following equations for non-negative integers $a_{12}$ and $a_{21}$ which are trivially found to be $1$ and $s$, respectively
	\[\begin{array}{rcrll}
		s-1&=&-1\times 2&+&a_{12}\times(s+1),\\
		s-1&=&a_{21}\times 2&-&1\times (s+1).
	\end{array}\]
	Hence, $\rf(s-1)$ is found as desired.
\end{proof}

\begin{proposition}\label{p2}
	Let $S$ be an Arf numerical semigroup with multiplicity $3$ and conductor $s\geq 3$. If $s \equiv 0\mod 3$, then $S=\langle 3,s+1,s+2\rangle$, $\pf(S)=\{ s-2,s-1 \}$ and
	\[ \rf(s-2)= \begin{bmatrix}
		-1&1&0\\
		\frac{s-3}{3}&-1&1\\
		\frac{2s}{3}&0&-1 
	\end{bmatrix}, \rf(s-1)= \begin{bmatrix}
		-1&0&1\\
		\frac{2s}{3}&-1&0\\
		\frac{s}{3}&1&-1 
	\end{bmatrix}. \]
\end{proposition}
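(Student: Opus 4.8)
The plan is to follow the blueprint of Proposition \ref{p1}: first pin down the generators, then the Apéry set, then $\pf(S)$, and finally exhibit the RF-matrices by solving the defining equations explicitly. Since $S$ is Arf with $\m(S)=3$ and $s\equiv 0\bmod 3$, Proposition \ref{arf}(3) gives $S=\langle 3,s+1,s+2\rangle$, and because an Arf semigroup has maximal embedding dimension, $\{3,s+1,s+2\}$ is indeed the minimal generating set with $n_1=3<n_2=s+1<n_3=s+2$ (using $s\ge 3$). By Proposition \ref{2}, $\ap(S,3)=\{0,s+1,s+2\}$, and then Proposition \ref{1} yields $\pf(S)=\{(s+1)-3,(s+2)-3\}=\{s-2,s-1\}$, consistent with the general fact for Arf semigroups that $\pf(S)=\{n_2-n_1,n_3-n_1\}$.

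Next I would construct the RF-matrix of $f=s-2$. Each row $i$ requires $a_{ii}=-1$ and $s-2=\sum_j a_{ij}n_j$ with the off-diagonal entries in $\N_0$. Row $1$: $s-2=-3+a_{12}(s+1)+a_{13}(s+2)$; taking $a_{12}=1,a_{13}=0$ gives $-3+(s+1)=s-2$, which works. Row $2$: $s-2=a_{21}\cdot 3-(s+1)+a_{23}(s+2)$, i.e. $3a_{21}+(s+2)a_{23}=2s-1$; taking $a_{23}=1$ forces $3a_{21}=s-3$, so $a_{21}=\tfrac{s-3}{3}\in\N_0$ since $3\mid s$ and $s\ge 3$. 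Row $3$: $s-2=a_{31}\cdot 3+a_{32}(s+1)-(s+2)$, i.e. $3a_{31}+(s+1)a_{32}=2s$; taking $a_{32}=0$ gives $a_{31}=\tfrac{2s}{3}\in\N_0$. This reproduces the displayed $\rf(s-2)$. For $f=s-1$ I would do the same: row $1$, $s-1=-3+a_{12}(s+1)+a_{13}(s+2)$ with $a_{12}=0,a_{13}=1$ gives $-3+(s+2)=s-1$; row $2$, $3a_{21}+(s+2)a_{23}=2s$ with $a_{23}=0$ gives $a_{21}=\tfrac{2s}{3}$; row $3$, $3a_{31}+(s+1)a_{32}=2s+1$ with $a_{32}=1$ gives $3a_{31}=s$, so $a_{31}=\tfrac{s}{3}\in\N_0$. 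This matches the displayed $\rf(s-1)$.

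The only genuine subtlety — and even this is minor — is that a pseudo-Frobenius number can in principle have several RF-matrices, so the statement as written should be read as exhibiting \emph{these particular} RF-matrices rather than claiming uniqueness; the proof of Proposition \ref{p1} already adopts that reading (there the factorizations happen to be forced, but here one must simply check that the chosen off-diagonal entries are legitimate, which the divisibility $3\mid s$ guarantees). I expect no real obstacle: every equation above is linear with a one-parameter family of solutions over $\N_0$, and the constraint $s\equiv 0\bmod 3$ is exactly what makes the relevant entries $\tfrac{s-3}{3}$, $\tfrac{2s}{3}$, $\tfrac{s}{3}$ integral and non-negative for $s\ge 3$. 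So the write-up is essentially the verification that each displayed matrix satisfies the definition of an RF-matrix, together with the identification of the generators and of $\pf(S)$ via Propositions \ref{arf}, \ref{2}, and \ref{1}.
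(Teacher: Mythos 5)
Your proposal is correct and follows essentially the same route as the paper: identify $S=\langle 3,s+1,s+2\rangle$ via Proposition \ref{arf}, read off $\ap(S,3)$ and hence $\pf(S)=\{s-2,s-1\}$, and then solve the six factorization equations, with $3\mid s$ guaranteeing the entries $\tfrac{s-3}{3}$, $\tfrac{2s}{3}$, $\tfrac{s}{3}$ are non-negative integers. Your added remark that the statement exhibits particular RF-matrices rather than asserting uniqueness is a fair reading and consistent with how the paper treats the later, higher-multiplicity cases.
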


\begin{proof}
	If $s \equiv 0 \mod 3$, we know that $S=\langle 3,s+1,s+2\rangle$. Therefore, $\ap(S,3)=\{ 0,s+1,s+2 \}$, and so $\pf(S)=\{ s-2,s-1 \}$. To find row-factorization matrices of $s-2$ and $s-1$, we need to find the coefficients of following factorizations
	\[\begin{array}{rlrlrlr}
		s-2&=&-1\times3&+&a_{12}\times(s+1)&+&a_{13}\times(s+2),\\
		s-2&=&a_{21}\times3&-&1\times(s+1)&+&a_{23}\times(s+2),\\
		s-2&=&a_{31}\times3&+&a_{32}\times(s+1)&-&1\times(s+2),
	\end{array}\]
	and
	\[\begin{array}{rlrlrlr}
		s-1&=&-1\times3&+&b_{12}\times(s+1)&+&b_{13}\times(s+2),\\
		s-1&=&b_{21}\times3&-&1\times(s+1)&+&b_{23}\times(s+2),\\
		s-1&=&b_{31}\times3&+&b_{32}\times(s+1)&-&1\times(s+2).
	\end{array}\]
	By definition of row-factorization matrices, and since $s \equiv 0\mod 3$, it is easy to find that $a_{12}=1$, $a_{13}=0$, $a_{21}=\frac{s-3}{3}$, $a_{23}=1$, $a_{31}=\frac{2s}{3}$, $a_{32}=0$, and $b_{12}=0$, $b_{13}=1$, $b_{21}=\frac{2s}{3}$, $b_{23}=0$,  $b_{31}=\frac{s}{3}$, $b_{32}=1$. Thus, $\rf(s-2)$ and $\rf(s-1)$ have been found as desired.
\end{proof}

Next proposition can be proved in a similar way to the proof of Proposition \ref{p2}.

\begin{proposition} \label{p21}
	Let $S$ be an Arf numerical semigroup with multiplicity $3$ and conductor $s>3$. If $s \equiv 2\mod 3$, then $S=\langle 3,s,s+2\rangle$, $\pf(S)=\{ s-3,s-1 \}$ and
	\[ \rf(s-3)= \begin{bmatrix}
		-1&1&0\\
		\frac{s-5}{3}&-1&1\\
		\frac{2s-1}{3}&0&-1 
	\end{bmatrix}, \rf(s-1)= \begin{bmatrix}
		-1&0&1\\
		\frac{2s-1}{3}&-1&0\\
		\frac{s+1}{3}&1&-1 
	\end{bmatrix}. \]
\end{proposition}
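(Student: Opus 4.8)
The plan is to follow the same three-step recipe used in the proof of Proposition \ref{p2}: pin down $S$ together with $\ap(S,3)$, read off $\pf(S)$, and then solve the row-factorization equations row by row. First I would invoke Proposition \ref{arf} with the data $\m(S)=3$, $s>3$, $s\equiv 2\bmod 3$ to obtain $S=\langle 3,s,s+2\rangle$, noting that these hypotheses force the three generators to be distinct and ordered $3<s<s+2$. Since $S$ is Arf it has maximal embedding dimension, so Proposition \ref{2} gives $\ap(S,3)=\{0,s,s+2\}$. Applying Proposition \ref{1} with $n=3$: the element $0$ is not $\leq_S$-maximal (as $0\leq_S s$), while $s$ and $s+2$ are $\leq_S$-incomparable because $\pm 2\notin S$; hence $\maxx_{\leq_S}(\ap(S,3))=\{s,s+2\}$ and $\pf(S)=\{s-3,s-1\}$, in agreement with the formula $\{n_2-n_1,n_3-n_1\}$ recorded earlier for Arf semigroups, and $\ty(S)=2$.

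Next, exactly as in Proposition \ref{p2}, I would set up the two $3\times 3$ systems of row-factorization equations. For $f=s-3$ these are
\[
\begin{array}{rclclcl}
s-3 & = & -1\cdot 3 & + & a_{12}\cdot s & + & a_{13}\cdot(s+2),\\
s-3 & = & a_{21}\cdot 3 & - & 1\cdot s & + & a_{23}\cdot(s+2),\\
s-3 & = & a_{31}\cdot 3 & + & a_{32}\cdot s & - & 1\cdot(s+2),
\end{array}
\]
together with the analogous three equations for $f=s-1$. Each row is then solved by a short magnitude estimate that isolates the coefficient of $s$ or of $s+2$: the first equation above forces $a_{13}=0$ (otherwise $a_{13}(s+2)>s$) and then $a_{12}=1$; the second forces $a_{23}=1$ and then $a_{21}=\tfrac{s-5}{3}$; the third forces $a_{32}=0$ and $a_{31}=\tfrac{2s-1}{3}$. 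Treating the $s-1$ rows the same way yields $b_{12}=0$, $b_{13}=1$, $b_{23}=0$, $b_{21}=\tfrac{2s-1}{3}$, $b_{32}=1$, $b_{31}=\tfrac{s+1}{3}$. Since in each row the remaining coefficients are uniquely determined, this simultaneously shows that the displayed matrices are the \emph{only} RF-matrices of $s-3$ and $s-1$.

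The congruence $s\equiv 2\bmod 3$ is used only at the very end, to confirm that the coefficients attached to the generator $3$ are non-negative integers: from $s\equiv 2\bmod 3$ one has $s-5\equiv 2s-1\equiv s+1\equiv 0\pmod 3$, so $\tfrac{s-5}{3},\tfrac{2s-1}{3},\tfrac{s+1}{3}\in\Z$, and $s\geq 5$ makes each of them $\geq 0$ (indeed $\tfrac{s-5}{3}=0$ when $s=5$). Substituting the solved coefficients back into the equations verifies that each row sums to the correct pseudo-Frobenius number, so assembling them produces the two matrices as stated. I do not anticipate a genuine obstacle here: the content is the same bookkeeping as in Proposition \ref{p2}, and the only points demanding a little care are tracking residues modulo $3$ and the handful of inequalities that pin down each row's coefficients uniquely.
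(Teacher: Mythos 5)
Your proposal is correct and follows essentially the same route as the paper, which simply notes that Proposition \ref{p21} is proved exactly like Proposition \ref{p2}: identify $S$ and $\ap(S,3)$, read off $\pf(S)=\{s-3,s-1\}$, and solve the six row-factorization equations, with $s\equiv 2\bmod 3$ guaranteeing the integrality of $\tfrac{s-5}{3}$, $\tfrac{2s-1}{3}$, $\tfrac{s+1}{3}$. Your additional observation that each row's coefficients are uniquely determined (so these are the only RF-matrices) is a correct refinement but not required by the statement.
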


Next we provide the characterizations of row-factorization matrices of Arf numerical semigroups with multiplicity $4$. We will give a complete list of these matrices.we will leave the proofs since they only involve finding factorizations for the pseudo-Frobenius numbers. Also, one needs to be careful, because for a pseudo-Frobenius number there might be different factorizations, and so it may have more than one row-factorization matrix.

\begin{proposition}\label{p3}
	Let $S$ be an Arf numerical semigroup with multiplicity $4$ and conductor $s\geq 4$. If $s \equiv 0\mod 4$, then either
	\begin{enumerate}
		\item $S=\langle 4,4k+2,s+1,s+3\rangle$ for some $1\leq k<\frac{s}{4}$, $\pf(S)=\{ 4k-2,s-3,s-1 \}$ and
		\[ \rf(4k-2)= \begin{bmatrix}
			-1&1&0&0\\
			2k&-1&0&0\\
			k-1&0&-1&1\\
			k&0&1&-1 
		\end{bmatrix},\]
		\[\rf(s-3)= \begin{bmatrix}
			-1&0&1&0\\
			k-1&-1&0&1\\
			\frac{s}{2}-a-(2a-1)k &2a-1&-1&0\\
			\frac{s}{2}-b-2bk &2b&0&-1 \end{bmatrix}, \]
		\[\text{ where } 1\leq a \leq \frac{s+2k}{2(2k+1)}, 0\leq b \leq \frac{s}{2(2k+1)}, \text{ and } a,b\in \N_0, \]
		
		\[ \rf(s-1)= \begin{bmatrix}
			-1&0&0&1\\
			k&-1&1&0\\
			\frac{s}{2}-b-2bk&2b&-1&0\\
			\frac{s}{2}-a+1-(2a-1)k&2a-1&0&-1 \end{bmatrix} \text{or}
		\begin{bmatrix}
			-1&0&0&1\\
			k&-1&1&0\\
			\frac{s}{2}-b-2bk&2b&-1&0\\
			0&0&2&-1 \end{bmatrix},
		\]
		\[\text{ where } 1\leq a \leq \frac{s+2k+2}{2(2k+1)}, 0\leq b \leq \frac{s}{2(2k+1)}, \text{ and } a,b\in \N_0, \]
		or
		\item $S=\langle 4,s+1,s+2,s+3\rangle$, $\pf(S)=\{s-3,s-2,s-1 \}$ and
		\[ \rf(s-3)= \begin{bmatrix}
			-1&1&0&0\\
			\frac{s-4}{4}&-1&1&0\\
			\frac{s-4}{4}&0&-1&1\\
			\frac{s}{2}&0&0&-1 
		\end{bmatrix}, \rf(s-2)= \begin{bmatrix}
			-1&0&1&0\\
			\frac{s-4}{4}&-1&0&1\\
			\frac{s}{2}&0&-1&0\\
			\frac{s}{4}&1&0&-1 \end{bmatrix}, \]
		\[ \rf(s-1)= \begin{bmatrix}
			-1&0&0&1\\
			\frac{s}{2}&-1&0&0\\
			\frac{s}{4}&1&-1&0\\
			\frac{s}{4}&0&1&-1 \end{bmatrix} \text{or}
		\begin{bmatrix}
			-1&0&0&1\\
			\frac{s}{2}&-1&0&0\\
			\frac{s}{4}&1&-1&0\\
			0&2&0&-1 \end{bmatrix}.
		\]
	\end{enumerate}
\end{proposition}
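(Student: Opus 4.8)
The plan is to read off the generators from Proposition~\ref{arf} and then, for each pseudo-Frobenius number, solve the defining systems one row at a time. By Proposition~\ref{arf}, an Arf numerical semigroup of multiplicity~$4$ with $s\equiv 0\bmod 4$ is $S=\langle 4,4k+2,s+1,s+3\rangle$ for some $k\in\{1,\dots,\tfrac{s}{4}\}$; the value $k=\tfrac{s}{4}$ gives $4k+2=s+2$ and hence $S=\langle 4,s+1,s+2,s+3\rangle$, which is case~(2), while $1\le k<\tfrac{s}{4}$ is case~(1). In either case the listed generators are increasing with $n_{1}=4$, so, $S$ being Arf, $\ap(S,4)=\{0,n_{2},n_{3},n_{4}\}$ and $\pf(S)=\{n_{2}-4,\,n_{3}-4,\,n_{4}-4\}$; substituting yields $\pf(S)=\{4k-2,\,s-3,\,s-1\}$ in case~(1) and $\pf(S)=\{s-3,\,s-2,\,s-1\}$ in case~(2).

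Fix $f\in\pf(S)$ and a row index $i$. The $i$-th row of an RF-matrix of $f$ is exactly a factorization $f+n_{i}=\sum_{j\neq i}a_{ij}n_{j}$ with all $a_{ij}\in\N_{0}$ (together with $a_{ii}=-1$), and an RF-matrix of $f$ is any choice of an admissible $i$-th row for every $i$; thus it suffices to enumerate these factorizations. I would do so with three constraints. First, $f+n_{i}\le(s-1)+(s+3)=2s+2$, and since two distinct generators of size $\ge s+1$ sum to more than $2s+2$ while doubling any large generator other than $s+1$ exceeds $2s+2$, the contribution of generators $\ge s+1$ to the factorization is one of $0$, $s+1$, $2(s+1)$, $s+2$, $s+3$ --- subject to $n_{i}$ being excluded --- so the total large coefficient is at most~$2$. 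Second, reduce modulo the multiplicity $4$: among $n_{2},n_{3},n_{4}$ the residues are $2,1,3$ in case~(1) and $1,2,3$ in case~(2), so once the large part is fixed the congruence $f+n_{i}\equiv\sum_{j}a_{ij}n_{j}\pmod 4$ determines the coefficient of the remaining even generator ($n_{2}=4k+2$ in case~(1), $n_{3}=s+2$ in case~(2)) modulo~$2$. Third, with every other coefficient fixed, the coefficient of $n_{1}=4$ is forced, and requiring it to be non-negative is precisely what bounds the parameters. The one-parameter families in case~(1) are explained by the relation $2n_{2}=(2k+1)n_{1}$: within a fixed parity class of the $n_{2}$-coefficient one may raise it by~$2$ while dropping the $n_{1}$-coefficient by $2k+1$, and sliding this way until non-negativity fails traces out exactly the families with parameters $a$ and $b$ appearing in $\rf(s-3)$ and $\rf(s-1)$.

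Carrying this out row by row is routine, and I expect the accounting of the non-uniqueness to be the only real obstacle. One must (i) establish \emph{completeness}: in each row, every residue/magnitude case other than the displayed one(s) must be shown to force the $n_{1}$-coefficient negative, so that no RF-matrix is omitted; and (ii) correctly locate the \emph{sporadic} alternative: for $f=s-1$ one has $f+n_{4}=2s+2=2n_{3}$ in case~(1) and $f+n_{4}=2s+2=2n_{2}$ in case~(2), which yields the extra admissible fourth row $(0,0,2,-1)$, respectively $(0,2,0,-1)$. This row is not a member of the main family --- its second-generator coefficient is $0$, whereas the family's is always odd --- and it is responsible for the two matrices displayed for $\rf(s-1)$; all the remaining rows --- the first rows, the rows avoiding the second generator, the whole of $\rf(4k-2)$ in case~(1), and the whole of $\rf(s-3)$ and $\rf(s-2)$ in case~(2) --- come out uniquely. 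Matching the forced coefficient of $n_{1}$ against the displayed entries and imposing its non-negativity then gives the stated ranges for $a$ and $b$ and completes the proof.
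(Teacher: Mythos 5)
Your proposal is correct and takes essentially the same route the paper intends: the paper omits the proof of this proposition, saying only that it ``involves finding factorizations for the pseudo-Frobenius numbers'' (modelled on the proof of Proposition \ref{p2}), and your row-by-row enumeration via the congruences modulo $4$, the bound $f+n_i\le 2s+2$ on the large generators, and the relation $2(4k+2)=(2k+1)\cdot 4$ governing the $a,b$-families is exactly that computation made systematic. In particular you correctly account for the only sources of non-uniqueness, namely the parametrized third and fourth rows in case (1) and the sporadic row $(0,0,2,-1)$ (resp.\ $(0,2,0,-1)$) arising from $2s+2=2(s+1)$.
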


\begin{proposition} \label{p31}
	Let $S$ be an Arf numerical semigroup with multiplicity $4$ and conductor $s>4$.
	\begin{enumerate}
		\item If $s \equiv 2\mod 4$, then $S=\langle 4,4k+2,s+1,s+3\rangle$ for some $k \in \{ 1,\dots,\frac{s-2}{4} \}$, $\pf(S)=\{ 4k-2,s-3,s-1 \}$ and
		\[ \rf(4k-2)= \begin{bmatrix}
			-1&1&0&0\\
			2k&-1&0&0\\
			k-1&0&-1&1\\
			k&0&1&-1 
		\end{bmatrix},\]
		\[\rf(s-3)= \begin{bmatrix}
			-1&0&1&0\\
			k-1&-1&0&1\\
			\frac{s}{2}-a-(2a-1)k&2a-1&-1&0\\
			\frac{s}{2}-b-2bk&2b&0&-1 \end{bmatrix}, \]
		\[\text{ where } 1\leq a \leq \frac{s+2k}{2(2k+1)}, \text{ and } 0\leq b \leq \frac{s}{2(2k+1)}, a,b\in \N_0, \]
		
		\[ \rf(s-1)= \begin{bmatrix}
			-1&0&0&1\\
			k&-1&1&0\\
			\frac{s}{2}-b-bk&2b&-1&0\\
			\frac{s}{2}-a+1-(2a-1)k&2a-1&0&-1 \end{bmatrix} \text{or}
		\begin{bmatrix}
			-1&0&0&1\\
			k&-1&1&0\\
			\frac{s}{2}-b-2bk&2b&-1&0\\
			0&0&2&-1 \end{bmatrix},
		\]
		\[\text{ where } 1\leq a \leq \frac{s+2k+2}{2(2k+1)}, \text{ and } 0\leq b \leq \frac{s}{2(2k+1)}, a,b\in \N_0, \]
		\item if $s \equiv 3\mod 4$, then $S=\langle 4,s,s+2,s+3\rangle$, $\pf(S)=\{ s-4,s-2,s-1 \}$ and
		\[ \rf(s-4)= \begin{bmatrix}
			-1&1&0&0\\
			\frac{s-7}{4}&-1&0&1\\
			\frac{s-1}{2}&0&-1&0\\
			\frac{s-3}{4}&0&1&-1 
		\end{bmatrix}, \] 
		\[ \rf(s-2)= \begin{bmatrix}
			-1&0&1&0\\
			\frac{s-1}{2}&-1&0&0\\
			\frac{s-3}{4}&0&-1&1\\
			\frac{s+1}{4}&1&0&-1 \end{bmatrix} \text{or}
		\begin{bmatrix}
			-1&0&1&0\\
			\frac{s-1}{2}&-1&0&0\\
			0&2&-1&0\\
			\frac{s+1}{4}&1&0&-1 \end{bmatrix},
		\]
		\[ \rf(s-1)= \begin{bmatrix}
			-1&0&0&1\\
			\frac{s-3}{4}&-1&1&0\\
			\frac{s+1}{4}&1&-1&0\\
			0&1&1&-1 \end{bmatrix} \text{or} \begin{bmatrix}
			-1&0&0&1\\
			\frac{s-3}{4}&-1&1&0\\
			\frac{s+1}{4}&1&-1&0\\
			\frac{s+1}{2}&0&0&-1 \end{bmatrix}.  \]
	\end{enumerate}
\end{proposition}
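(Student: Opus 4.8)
The plan is to treat the congruence classes $s\equiv 2$ and $s\equiv 3\pmod 4$ separately; in each, I first fix the minimal generating system and the pseudo-Frobenius set, and then I solve, row by row, the defining linear equations for every RF-matrix of every pseudo-Frobenius number.

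\textbf{Generators and pseudo-Frobenius numbers.} If $s\equiv 2\pmod 4$, Proposition \ref{arf} gives $S=\langle 4,4k+2,s+1,s+3\rangle$ with $1\le k\le\frac{s-2}{4}$; since $4k+2\le s<s+1<s+3$, these four numbers are the generators written in increasing order, so $n_{1}=4$, $n_{2}=4k+2$, $n_{3}=s+1$, $n_{4}=s+3$. If $s\equiv 3\pmod 4$ then $S=\langle 4,s,s+2,s+3\rangle$ with $s\ge 7$, again in increasing order. In both cases $S$ is Arf, so $\pf(S)=\{n_{2}-n_{1},n_{3}-n_{1},n_{4}-n_{1}\}$, which is $\{4k-2,s-3,s-1\}$ and $\{s-4,s-2,s-1\}$ respectively, as claimed.

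\textbf{Solving the row equations.} Fix $f\in\pf(S)$ and a row index $i$; the $i$-th row of an RF-matrix of $f$ records a factorization $f+n_{i}=\sum_{j\ne i}a_{ij}n_{j}$ with all $a_{ij}\in\N_{0}$. Since $f+n_{i}$ is bounded above by roughly $2s$ while the large generators are of size about $s$, only a bounded number of the large generators can appear with a positive coefficient, so for each row there are finitely many choices for those coefficients; after subtracting them, the residual equation involves only $4$ and $4k+2$ (both even), resp.\ $4$ and $s$, and reducing it modulo $4$ — or merely modulo $2$ — kills all but one or two cases, the obstruction being the residue class of $f+n_{i}$. The surviving solutions are exactly the rows displayed, and requiring their entries to be non-negative integers is what fixes the stated ranges of the parameters $a$ and $b$. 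Where two solutions survive — the last row of $\rf(s-1)$ when $s\equiv 2\pmod 4$, and the third row of $\rf(s-2)$ and the last row of $\rf(s-1)$ when $s\equiv 3\pmod 4$ — both genuinely occur (the ambiguous entry differs between them), which is the source of the ``or'' in the statement.

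\textbf{The parametrized rows.} The only rows that are not rigid are the last two rows of $\rf(s-3)$ and of $\rf(s-1)$ in the case $s\equiv 2\pmod 4$. For the third row of $\rf(s-3)$, the equation becomes, after dividing by $2$, $s-1=2a_{31}+a_{32}(2k+1)$; as $2k+1$ and $s-1$ are both odd, $a_{32}$ is forced to be odd, so $a_{32}=2a-1$ with $a\ge 1$, and then $a_{31}=\frac{s}{2}-a-(2a-1)k$, with $a_{31}\ge 0$ equivalent to $a\le\frac{s+2k}{2(2k+1)}$. The fourth row of $\rf(s-3)$, the two parametrized rows of $\rf(s-1)$, and (trivially) the rigid rows of $\rf(4k-2)$ are handled the same way, with the parity of the relevant coefficient forced in whichever direction the right-hand side dictates. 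I expect the one place where care is genuinely needed to be checking that these case analyses are \emph{exhaustive}, so that the listed matrices really are all of the RF-matrices rather than a subfamily; the arithmetic in each individual case is routine.
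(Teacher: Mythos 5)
Your approach---computing the Ap\'ery set to get $\pf(S)$ and then exhaustively solving each row equation by bounding the coefficients of the generators of size about $s$ and using parity/mod-$4$ reductions to pin down the rest---is exactly the argument the paper has in mind (it omits the proof precisely because it ``only involve[s] finding factorizations for the pseudo-Frobenius numbers''), and your worked sample case, the third row of $\rf(s-3)$, is correct. Incidentally, carrying your computation through for the third row of the first $\rf(s-1)$ matrix in case (1) yields $\frac{s}{2}-b-2bk$ rather than the $\frac{s}{2}-b-bk$ printed in the statement, which appears to be a typo in the paper.
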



Now we provide the characterizations of row-factorization matrices of Arf numerical semigroups with multiplicity $5$. By Proposition \ref{arf}, we know that these numerical semigroups formulated in six different ways. We will give a complete list of these matrices. We will leave the proofs again since they can be proved by finding the factorizations of pseudo-Frobenius numbers.

\begin{proposition} \label{p4}
	Let $S$ be an Arf numerical semigroup with multiplicity $5$ and conductor $s>5$. If $s \equiv 0\mod 5$ and $S=\langle 5,s-2,s+1,s+2,s+4\rangle$, then $\pf(S)=\{ s-7,s-4,s-3,s-1 \}$ and
	\[ \rf(s-7)= \begin{bmatrix}
		-1&1&0&0&0\\
		\frac{s-10}{5}&-1&1&0&0\\
		\frac{s-10}{5}&0&-1&0&1\\
		\frac{2s-5}{5}&0&0&-1&0\\
		\frac{s-5}{5}&0&0&1&-1 
	\end{bmatrix},\]
	\[\rf(s-4)= \begin{bmatrix}
		-1&0&1&0&0\\
		\frac{s-10}{5}&-1&0&0&1\\
		\frac{s-5}{5}&0&-1&1&0\\
		\frac{s}{5}&1&0&-1&0\\
		\frac{2s}{5}&0&0&0&-1 
	\end{bmatrix} \text{or} \begin{bmatrix}
		-1&0&1&0&0\\
		\frac{s-10}{5}&-1&0&0&1\\
		\frac{s-5}{5}&0&-1&1&0\\
		\frac{s}{5}&1&0&-1&0\\
		0&1&0&1&-1 
	\end{bmatrix}, \]
	\[ \rf(s-3)= \begin{bmatrix}
		-1&0&0&1&0\\
		\frac{2s-5}{5}&-1&0&0&0\\
		\frac{s}{5}&1&-1&0&0\\
		\frac{s-5}{5}&0&0&-1&1\\
		\frac{s}{5}&0&1&0&-1 
	\end{bmatrix} \text{or} \begin{bmatrix}
		-1&0&0&1&0\\
		\frac{2s-5}{5}&-1&0&0&0\\
		\frac{s}{5}&1&-1&0&0\\
		0&1&1&-1&0\\
		\frac{s}{5}&0&1&0&-1 
	\end{bmatrix} \]
	\[ \text{or} \begin{bmatrix}
		-1&0&0&1&0\\
		\frac{2s-5}{5}&-1&0&0&0\\
		\frac{s}{5}&1&-1&0&0\\
		\frac{s-5}{5}&0&0&-1&1\\
		1&2&0&0&-1 
	\end{bmatrix} \text{or} \begin{bmatrix}
		-1&0&0&1&0\\
		\frac{2s-5}{5}&-1&0&0&0\\
		\frac{s}{5}&1&-1&0&0\\
		0&1&1&-1&0\\
		1&2&0&0&-1 
	\end{bmatrix}, \]
	\[ \rf(s-1)= \begin{bmatrix}
		-1&0&0&0&1\\
		\frac{s-5}{5}&-1&0&1&0\\
		\frac{2s}{5}&0&-1&0&0\\
		\frac{s}{5}&0&1&-1&0\\
		\frac{s+5}{5}&1&0&0&-1 
	\end{bmatrix} \text{or} \begin{bmatrix}
		-1&0&0&0&1\\
		\frac{s-5}{5}&-1&0&1&0\\
		0&1&-1&1&0\\
		\frac{s}{5}&0&1&-1&0\\
		\frac{s+5}{5}&1&0&0&-1 
	\end{bmatrix} \]
	\[
	\text{or} \begin{bmatrix}
		-1&0&0&0&1\\
		\frac{s-5}{5}&-1&0&1&0\\
		\frac{2s}{5}&0&-1&0&0\\
		\frac{s}{5}&0&1&-1&0\\
		0&0&1&1&-1 
	\end{bmatrix} \text{or}
	\begin{bmatrix}
		-1&0&0&0&1\\
		\frac{s-5}{5}&-1&0&1&0\\
		0&1&-1&1&0\\
		\frac{s}{5}&0&1&-1&0\\
		0&0&1&1&-1 
	\end{bmatrix} \]
	\[ \text{or} \begin{bmatrix}
		-1&0&0&0&1\\
		\frac{s-5}{5}&-1&0&1&0\\
		\frac{2s}{5}&0&-1&0&0\\
		1&2&0&-1&0\\
		\frac{s+5}{5}&1&0&0&-1 
	\end{bmatrix} \text{or} \begin{bmatrix}
		-1&0&0&0&1\\
		\frac{s-5}{5}&-1&0&1&0\\
		0&1&-1&1&0\\
		1&2&0&-1&0\\
		\frac{s+5}{5}&1&0&0&-1 
	\end{bmatrix} \]
	\[
	\text{or} \begin{bmatrix}
		-1&0&0&0&1\\
		\frac{s-5}{5}&-1&0&1&0\\
		\frac{2s}{5}&0&-1&0&0\\
		1&2&0&-1&0\\
		0&0&1&1&-1 
	\end{bmatrix} \text{or}
	\begin{bmatrix}
		-1&0&0&0&1\\
		\frac{s-5}{5}&-1&0&1&0\\
		0&1&-1&1&0\\
		1&2&0&-1&0\\
		0&0&1&1&-1 
	\end{bmatrix}. \]
\end{proposition}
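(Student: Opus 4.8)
The plan is to first determine $\pf(S)$, then reduce the listing of RF-matrices to a listing of factorizations, and finally carry that listing out by a crude size estimate followed by a residue count modulo $5$. Since $s>5$ and $s\equiv 0\bmod 5$ we have $s\ge 10$, so $5<s-2<s+1<s+2<s+4$; these are $n_1<n_2<n_3<n_4<n_5$. Being Arf, $S$ has maximal embedding dimension, hence $\ap(S,5)=\{0,n_2,n_3,n_4,n_5\}$ by Proposition \ref{2}, and the formula for $\pf$ of an Arf semigroup recorded in Section \ref{sec2} (a consequence of Proposition \ref{1}) gives $\pf(S)=\{n_2-5,n_3-5,n_4-5,n_5-5\}=\{s-7,s-4,s-3,s-1\}$.

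\textbf{Reduction to factorizations.} Fix $f\in\pf(S)$. A matrix $[a_{ij}]$ with $a_{ii}=-1$ and $a_{ij}\in\N_0$ for $i\ne j$ is an RF-matrix of $f$ exactly when, for every $i$, its $i$-th row records a factorization $f+n_i=\sum_{j\ne i}a_{ij}n_j$ of $f+n_i\in S$; moreover any factorization of $f+n_i$ has $n_i$-coefficient $0$, for otherwise $f\in S$. Thus the RF-matrices of $f$ correspond bijectively to the $5$-tuples obtained by choosing one factorization of each of $f+n_1,\dots,f+n_5$, and it is enough to enumerate, for every pair $(f,i)$, all factorizations of $f+n_i$.

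\textbf{The enumeration.} In all twenty pairs $(f,i)$ one has $f+n_i\le 2s+3$, hence $3(s-2)=3s-6>f+n_i$ because $s\ge 10$; so a factorization of $f+n_i$ uses at most two of the large generators $n_2,n_3,n_4,n_5$. For each pair one lists the finitely many sums of at most two large generators (other than $n_i$) not exceeding $f+n_i$, subtracts, and retains the choices for which the remainder is a non-negative multiple of $5$ --- a single congruence mod $5$ made explicit by $s\equiv 0\bmod 5$; this shows each row admits one or two factorizations. (For $i=1$, $f+5$ is one of $n_2,n_3,n_4,n_5$ and its only factorization avoiding $n_1$ is the trivial one.) Running through the cases: for $f=s-7$ every row is forced, so $\rf(s-7)$ is the single displayed matrix; for $f=s-4$ only the last row branches, giving the two listed matrices; for $f=s-3$ the fourth and fifth rows branch, giving $2\cdot2=4$ matrices; and for $f=s-1$ the third, fourth and fifth rows branch, giving $2^3=8$ matrices, exactly those in the statement.

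\textbf{Main obstacle.} There is no conceptual difficulty --- the size estimate turns each of the twenty cases into a finite check, and $s\equiv 0\bmod 5$ picks out precisely the displayed matrices. The genuine work, and the easiest place to slip, is the bookkeeping for $f=s-1$, where three separate rows split: one must make sure no factorization is overlooked (in particular that among sums of two large generators only a handful, such as $n_2+n_4$, $2n_2$ and $n_3+n_4$, leave a non-negative multiple of $5$, and that these are exactly what produces the ``extra'' rows such as $(1,2,0,-1,0)$ and $(0,0,1,1,-1)$), and then that all eight product matrices are accounted for. The shorter checks for $f=s-4$ and $f=s-3$ proceed the same way.
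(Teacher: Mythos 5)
Your proposal is correct and follows exactly the route the paper intends: the paper omits the proof of this proposition, stating only that it "can be proved by finding the factorizations of pseudo-Frobenius numbers" as in Proposition \ref{p2}, which is precisely your reduction to factorizations of $f+n_i$ followed by the finite check. Your added size estimate ($3(s-2)>f+n_i$, so at most two large generators) and the residue count mod $5$ make the exhaustiveness of that check explicit, and the resulting branch counts ($1$, $2$, $4$, $8$ matrices) agree with the statement.
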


		\begin{proposition} \label{51}
			Let $S$ be an Arf numerical semigroup with multiplicity $5$ and conductor $s\geq 5$. If $s \equiv 0\mod 5$ and $S=\langle 5,s+1,s+2,s+3,s+4\rangle$, then $\pf(S)=\{ s-4,s-3,s-2,s-1 \}$ and
			\[ \rf(s-4)= \begin{bmatrix}
				-1&1&0&0&0\\
				\frac{s-5}{5}&-1&1&0&0\\
				\frac{s-5}{5}&0&-1&1&0\\
				\frac{s-5}{5}&0&0&-1&1\\
				\frac{2s}{5}&0&0&0&-1 
			\end{bmatrix}, \rf(s-3)= \begin{bmatrix}
				-1&0&1&0&0\\
				\frac{s-5}{5}&-1&0&1&0\\
				\frac{s-5}{5}&0&-1&0&1\\
				\frac{2s}{5}&0&0&-1&0\\
				\frac{s}{5}&1&0&0&-1 
			\end{bmatrix}, \]
			\[ \rf(s-2)= \begin{bmatrix}
				-1&0&0&1&0\\
				\frac{s-5}{5}&-1&0&0&1\\
				\frac{2s}{5}&0&-1&0&0\\
				\frac{s}{5}&1&0&-1&0\\
				\frac{s}{5}&0&1&0&-1 
			\end{bmatrix} \text{or} \begin{bmatrix}
				-1&0&0&1&0\\
				\frac{s-5}{5}&-1&0&0&1\\
				\frac{2s}{5}&0&-1&0&0\\
				\frac{s}{5}&1&0&-1&0\\
				0&2&0&0&-1 
			\end{bmatrix},\]
			\[ \rf(s-1)= \begin{bmatrix}
				-1&0&0&0&1\\
				\frac{2s}{5}&-1&0&0&0\\
				\frac{s}{5}&1&-1&0&0\\
				\frac{s}{5}&0&1&-1&0\\
				\frac{s}{5}&0&0&1&-1 
			\end{bmatrix} \text{or} \begin{bmatrix}
				-1&0&0&0&1\\
				\frac{2s}{5}&-1&0&0&0\\
				\frac{s}{5}&1&-1&0&0\\
				0&2&0&-1&0\\
				\frac{s}{5}&0&0&1&-1 
			\end{bmatrix}\]
			\[ \text{or}
			\begin{bmatrix}
				-1&0&0&0&1\\
				\frac{2s}{5}&-1&0&0&0\\
				\frac{s}{5}&1&-1&0&0\\
				\frac{s}{5}&0&1&-1&0\\
				0&1&1&0&-1 
			\end{bmatrix} \text{or} \begin{bmatrix}
				-1&0&0&0&1\\
				\frac{2s}{5}&-1&0&0&0\\
				\frac{s}{5}&1&-1&0&0\\
				0&2&0&-1&0\\
				0&1&1&0&-1 
			\end{bmatrix}.\]
		\end{proposition}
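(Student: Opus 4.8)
The plan is to establish the proposition in two stages: first fix $S$ and compute $\pf(S)$, then enumerate, for each pseudo-Frobenius number, all of its row-factorization matrices.

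For the first stage, write $n_1=5<n_2=s+1<n_3=s+2<n_4=s+3<n_5=s+4$; these are among the semigroups produced by Proposition \ref{arf} in the case $s\equiv 0\bmod 5$, so $S$ is a genuine Arf numerical semigroup and $\{n_1,\dots,n_5\}$ is a minimal generating set of cardinality $5=\m(S)$. Hence $S$ has maximal embedding dimension and Proposition \ref{2} gives $\ap(S,5)=\{0,n_2,n_3,n_4,n_5\}$; the four nonzero Ap\'ery elements are pairwise $\leq_S$-incomparable (their differences are $1$, $2$ or $3$, none of which lies in $S$ since $\co(S)=s\geq 5$), so by Proposition \ref{1}, $\pf(S)=\{n_2-5,n_3-5,n_4-5,n_5-5\}=\{s-4,s-3,s-2,s-1\}$, consistent with the general description of $\pf$ for Arf semigroups recalled in Section \ref{sec2} and with $\ty(S)=4$.

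For the second stage, fix $f\in\pf(S)$ and $i\in\{1,\dots,5\}$. By the definition of an RF-matrix, the admissible $i$-th rows of $\rf(f)$ are exactly the tuples with $a_{ii}=-1$, $a_{ij}\in\N_0$ for $j\neq i$, and $f+n_i=\sum_{j\neq i}a_{ij}n_j$; so the task reduces to listing, for each $f$ and each $i$, all factorizations of $f+n_i$ into the generators $\{n_j:j\neq i\}$. The key simplification is a size estimate: since $f\leq s-1$ and $n_i\leq s+4$ we have $f+n_i\leq 2s+3$, whereas any three of the large generators $s+1,\dots,s+4$ already sum to at least $3s+3>2s+3$; so in every such factorization the large generators occur with total multiplicity at most $2$. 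Once one of these finitely many ``large parts'' is chosen, the remainder must be a non-negative multiple of $5$, and whether it is is decided by a single congruence modulo $5$ (using $s\equiv 0\bmod 5$). Running this over the twenty pairs $(f,i)$ is then a finite mechanical check of the same kind as in the proof of Proposition \ref{p2}: when $f+n_i$ is itself an admissible generator the row is the obvious one; when $f+n_i$ has exactly two admissible shapes --- typically $5\cdot\tfrac{\ast}{5}+n_j$ versus $n_j+n_k$, or two different single large generators --- the row carries the two displayed options, which is the source of every ``or'' in the statement. One should also watch the degenerate values at $s=5$, where entries such as $\tfrac{s-5}{5}$ vanish.

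The real content is bookkeeping, and that is where I expect the only difficulty: one must check that for each of the twenty pairs $(f,i)$ the displayed list of rows is complete (no overlooked third factorization) and that the ``or'' alternatives are genuinely distinct, differing in exactly one row. Since the rows for different $i$ are chosen independently, the number of RF-matrices of a given $f$ is the product of the per-row counts, and the concluding consistency check is that these products equal $1$, $1$, $2$, $4$ for $f=s-4,s-3,s-2,s-1$ respectively, matching the lists exhibited. Nothing beyond the bound $f+n_i\leq 2s+3$ and the mod-$5$ condition is needed.
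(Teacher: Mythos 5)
Your proof is correct and follows essentially the route the paper intends: the paper omits the argument for this proposition, saying only that the matrices are obtained by computing $\ap(S,5)$, reading off $\pf(S)$ via Propositions \ref{1} and \ref{2}, and then finding all factorizations of $f+n_i$ exactly as in the proof of Proposition \ref{p2}. Your added completeness device --- bounding the total multiplicity of the generators $s+1,\dots,s+4$ in any factorization by $2$ via $f+n_i\leq 2s+3<3s+3$ and then deciding each candidate by a congruence modulo $5$ --- correctly yields the per-row counts and hence the totals $1,1,2,4$ matching the displayed lists, so it is a sound (and more explicitly justified) version of the paper's omitted computation.
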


		\begin{proposition} \label{52}
			Let $S$ be an Arf numerical semigroup with multiplicity $5$ and conductor $s>5$. If $s \equiv 2\mod 5$, then $S=\langle 5,s,s+1,s+2,s+4\rangle$, $\pf(S)=\{ s-5,s-4,s-3,s-1 \}$ and
			\[ \rf(s-5)= \begin{bmatrix}
				-1&1&0&0&0\\
				\frac{s-7}{5}&-1&0&1&0\\
				\frac{2s-4}{5}&0&-1&0&0\\
				\frac{s-7}{5}&0&0&-1&1\\
				\frac{s-2}{5}&0&1&0&-1 
			\end{bmatrix}, \] 
			\[ \rf(s-4)= \begin{bmatrix}
				-1&0&1&0&0\\
				\frac{2s-4}{5}&-1&0&0&0\\
				\frac{s-7}{5}&0&-1&0&1\\
				\frac{s-2}{5}&1&0&-1&0\\
				\frac{s-2}{5}&0&0&1&-1 
			\end{bmatrix} \text{or} \begin{bmatrix}
				-1&0&1&0&0\\
				\frac{2s-4}{5}&-1&0&0&0\\
				\frac{s-7}{5}&0&-1&0&1\\
				\frac{s-2}{5}&1&0&-1&0\\
				0&2&0&0&-1 
			\end{bmatrix},\]
			\[ \rf(s-3)= \begin{bmatrix}
				-1&0&0&1&0\\
				\frac{s-7}{5}&-1&0&0&1\\
				\frac{s-2}{5}&1&-1&0&0\\
				\frac{s-2}{5}&0&1&-1&0\\
				\frac{2s+1}{5}&0&0&0&-1 
			\end{bmatrix} \text{or} \begin{bmatrix}
				-1&0&0&1&0\\
				\frac{s-7}{5}&-1&0&0&1\\
				\frac{s-2}{5}&1&-1&0&0\\
				\frac{s-2}{5}&0&1&-1&0\\
				0&1&1&0&-1 
			\end{bmatrix}, \]
			\[ \rf(s-1)= \begin{bmatrix}
				-1&0&0&0&1\\
				\frac{s-2}{5}&-1&1&0&0\\
				\frac{s-2}{5}&0&-1&1&0\\
				\frac{2s+1}{5}&0&0&-1&0\\
				\frac{s+3}{5}&1&0&0&-1 
			\end{bmatrix} \text{or} \begin{bmatrix}
				-1&0&0&0&1\\
				\frac{s-2}{5}&-1&1&0&0\\
				0&2&-1&0&0\\
				\frac{2s+1}{5}&0&0&-1&0\\
				\frac{s+3}{5}&1&0&0&-1 \end{bmatrix} \]
			\[
			\text{or} \begin{bmatrix}
				-1&0&0&0&1\\
				\frac{s-2}{5}&-1&1&0&0\\
				\frac{s-2}{5}&0&-1&1&0\\
				0&1&1&-1&0\\
				\frac{s+3}{5}&1&0&0&-1 
			\end{bmatrix} \text{or} \begin{bmatrix}
				-1&0&0&0&1\\
				\frac{s-2}{5}&-1&1&0&0\\
				\frac{s-2}{5}&0&-1&1&0\\
				\frac{2s+1}{5}&0&0&-1&0\\
				0&0&1&1&-1 
			\end{bmatrix}  
			\]
			\[ \text{or} \begin{bmatrix}
				-1&0&0&0&1\\
				\frac{s-2}{5}&-1&1&0&0\\
				0&2&-1&0&0\\
				0&1&1&-1&0\\
				\frac{s+3}{5}&1&0&0&-1 
			\end{bmatrix} \text{or} \begin{bmatrix}
				-1&0&0&0&1\\
				\frac{s-2}{5}&-1&1&0&0\\
				0&2&-1&0&0\\
				\frac{2s+1}{5}&0&0&-1&0\\
				0&0&1&1&-1 
			\end{bmatrix}\]
			\[
			\text{or}
			\begin{bmatrix}
				-1&0&0&0&1\\
				\frac{s-2}{5}&-1&1&0&0\\
				\frac{s-2}{5}&0&-1&1&0\\
				0&1&1&-1&0\\
				0&0&1&1&-1 
			\end{bmatrix} \text{or}
			\begin{bmatrix}
				-1&0&0&0&1\\
				\frac{s-2}{5}&-1&1&0&0\\
				0&2&-1&0&0\\
				0&1&1&-1&0\\
				0&0&1&1&-1 
			\end{bmatrix}.
			\]
		\end{proposition}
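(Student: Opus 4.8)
\section*{Proof proposal for Proposition \ref{52}}

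The plan is to follow the route of the proofs of Propositions \ref{p1} and \ref{p2}: first pin down $S$, its Ap\'ery set and its pseudo-Frobenius numbers, and then solve the RF-defining linear systems one row at a time, this time keeping track of the fact that some rows admit more than one solution. By case $(9)$ of Proposition \ref{arf}, the hypotheses $\m(S)=5$, $\co(S)=s>5$, $s\equiv 2\pmod 5$ force $S=\langle 5,s,s+1,s+2,s+4\rangle$; since $s\geq 7$ these five numbers are pairwise distinct, and since an Arf numerical semigroup has maximal embedding dimension we get $\ed(S)=5$, so these five distinct numbers are precisely the minimal generators $n_1=5<n_2=s<n_3=s+1<n_4=s+2<n_5=s+4$. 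By Proposition \ref{2}, $\ap(S,5)=\{0,s,s+1,s+2,s+4\}$, and by the description of $\pf(S)$ for Arf semigroups recalled in Section \ref{sec2}, $\pf(S)=\{n_2-5,n_3-5,n_4-5,n_5-5\}=\{s-5,s-4,s-3,s-1\}$ with $\ty(S)=4$.

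Fix $f\in\pf(S)$ and $i\in\{1,\dots,5\}$. An RF-matrix of $f$ is, by definition, a choice for each $i$ of a factorization $f+n_i=\sum_{j\neq i}a_{ij}n_j$ with all $a_{ij}\in\N_0$, and such a factorization exists because $f\in\pf(S)$ and $n_i\in S$. Two elementary observations make the search finite. First, a size bound: every pseudo-Frobenius number satisfies $f\leq s-1$, so $f+n_i\leq (s-1)+(s+4)=2s+3<3s$; hence any factorization of $f+n_i$ uses at most two of the ``large'' generators $n_2,n_3,n_4,n_5$ (each $\geq s$), and for $i=1$ at most one. Second, a residue bound: $n_2,n_3,n_4,n_5$ have residues $2,3,4,1$ modulo $5$ respectively — distinct and nonzero — while $n_1=5\equiv 0$. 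Therefore, once the number of large generators appearing is fixed, the admissible residue multiset(s) for them are determined by $f+n_i\bmod 5$, hence (each nonzero residue class containing a unique $n_j$) so is the large part of the factorization up to finitely many choices; the coefficient of $n_1=5$ is then forced to be $\bigl(f+n_i-(\text{large part})\bigr)/5$, which is an admissible entry exactly when this number is a non-negative multiple of $5$.

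I would then run through the twenty pairs $(f,i)$, $f\in\{s-5,s-4,s-3,s-1\}$, $i\in\{1,\dots,5\}$, recording in each case: the value and residue of $f+n_i$; the multisets of $0$, $1$, or $2$ large generators whose residues are compatible with that of $f+n_i$ and which satisfy $n_j+n_k\leq f+n_i$ (for most residues the two-large option is already excluded by this inequality); and the resulting coefficient of $5$, which by $s\equiv 2\pmod 5$ makes each of the entries $\tfrac{s-7}{5},\tfrac{s-2}{5},\tfrac{2s-4}{5},\tfrac{2s+1}{5},\tfrac{s+3}{5}$ a non-negative integer whenever $s\geq 7$. For $i=1$ the unique solution is $1\cdot n_k$ with $f=n_k-5$, which is the first row of each matrix listed. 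The remaining rows each yield one or two admissible factorizations, and assembling all combinations of these over the five rows reproduces exactly the matrices in the statement: one RF-matrix for $f=s-5$; two for each of $f=s-4$ and $f=s-3$, differing only in the last row; and, for $f=s-1$, two independent choices in each of rows $3,4,5$, so $2^3=8$ matrices.

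The only real work is this enumeration, and the main obstacle is exhaustiveness rather than difficulty: one must be certain that no admissible factorization has been missed — in particular the solutions of the form $n_j+n_k$ with coefficient $0$ on $5$, which produce the alternative matrices — and that the inequality $n_j+n_k\leq f+n_i$ and the integrality of the $5$-coefficient are verified in every branch. Since all quantities are linear in $s$ with $s\equiv 2\pmod 5$ fixed, each such check is a one-line computation, and the case analysis, though lengthy, is completely mechanical.
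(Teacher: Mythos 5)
Your proposal is correct and follows essentially the same route the paper intends: the paper explicitly omits the proofs of the multiplicity-five propositions, stating only that they are obtained by finding all factorizations of the pseudo-Frobenius numbers, which is exactly what you do. Your added bookkeeping via the residues of $n_2,\dots,n_5$ modulo $5$ and the bound $f+n_i<3s$ (so at most two large generators, at most one when $i=1$) is a clean way to certify exhaustiveness, and it does reproduce the stated count of one, two, two, and eight matrices for $s-5$, $s-4$, $s-3$, $s-1$ respectively.
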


		\begin{proposition} \label{53}
			Let $S$ be an Arf numerical semigroup with multiplicity $5$ and conductor $s>5$. If $s \equiv 3\mod 5$, then $S=\langle 5,s,s+1,s+3,s+4\rangle$, $\pf(S)=\{ s-5,s-4,s-2,s-1 \}$ and
			\[ \rf(s-5)= \begin{bmatrix}
				-1&1&0&0&0\\
				\frac{s-8}{5}&-1&0&1&0\\
				\frac{s-8}{5}&0&-1&0&1\\
				\frac{s-3}{5}&0&1&-1&0\\
				\frac{2s-1}{5}&0&0&0&-1 
			\end{bmatrix}, \] 
			\[ \rf(s-4)= \begin{bmatrix}
				-1&0&1&0&0\\
				\frac{s-8}{5}&-1&0&0&1\\
				\frac{s-3}{5}&1&-1&0&0\\
				\frac{2s-1}{5}&0&0&-1&0\\
				\frac{s-3}{5}&0&0&1&-1 
			\end{bmatrix} \text{or} \begin{bmatrix}
				-1&0&1&0&0\\
				\frac{s-8}{5}&-1&0&0&1\\
				\frac{s-3}{5}&1&-1&0&0\\
				\frac{2s-1}{5}&0&0&-1&0\\
				0&2&0&0&-1 
			\end{bmatrix},\]
			\[ \rf(s-2)= \begin{bmatrix}
				-1&0&0&1&0\\
				\frac{s-3}{5}&-1&1&0&0\\
				\frac{2s-1}{5}&0&-1&0&0\\
				\frac{s-3}{5}&0&0&-1&1\\
				\frac{s+2}{5}&1&0&0&-1 
			\end{bmatrix} \text{or} \begin{bmatrix}
				-1&0&0&1&0\\
				\frac{s-3}{5}&-1&1&0&0\\
				\frac{2s-1}{5}&0&-1&0&0\\
				0&1&1&-1&0\\
				\frac{s+2}{5}&1&0&0&-1 
			\end{bmatrix} \]
			\[ \text{or} \begin{bmatrix}
				-1&0&0&1&0\\
				\frac{s-3}{5}&-1&1&0&0\\
				\frac{2s-1}{5}&0&-1&0&0\\
				\frac{s-3}{5}&0&0&-1&1\\
				0&0&2&0&-1 
			\end{bmatrix} \text{or} \begin{bmatrix}
				-1&0&0&1&0\\
				\frac{s-3}{5}&-1&1&0&0\\
				\frac{2s-1}{5}&0&-1&0&0\\
				0&1&1&-1&0\\
				0&0&2&0&-1 
			\end{bmatrix}, \]
			\[ \rf(s-1)= \begin{bmatrix}
				-1&0&0&0&1\\
				\frac{2s-1}{5}&-1&0&0&0\\
				\frac{s-3}{5}&0&-1&1&0\\
				\frac{s+2}{5}&1&0&-1&0\\
				\frac{s+2}{5}&0&1&0&-1 
			\end{bmatrix} \text{or} \begin{bmatrix}
				-1&0&0&0&1\\
				\frac{2s-1}{5}&-1&0&0&0\\
				\frac{s-3}{5}&0&-1&1&0\\
				\frac{s+2}{5}&1&0&-1&0\\
				0&1&0&1&-1 
			\end{bmatrix} \]
			\[
			\text{or} \begin{bmatrix}
				-1&0&0&0&1\\
				\frac{2s-1}{5}&-1&0&0&0\\
				\frac{s-3}{5}&0&-1&1&0\\
				0&0&2&-1&0\\
				\frac{s+2}{5}&0&1&0&-1 
			\end{bmatrix} \text{or}
			\begin{bmatrix}
				-1&0&0&0&1\\
				\frac{2s-1}{5}&-1&0&0&0\\
				0&2&-1&0&0\\
				\frac{s+2}{5}&1&0&-1&0\\
				\frac{s+2}{5}&0&1&0&-1 
			\end{bmatrix} \]
			\[
			\text{or} \begin{bmatrix}
				-1&0&0&0&1\\
				\frac{2s-1}{5}&-1&0&0&0\\
				0&2&-1&0&0\\
				0&0&2&-1&0\\
				\frac{s+2}{5}&0&1&0&-1 
			\end{bmatrix} \text{or}
			\begin{bmatrix}
				-1&0&0&0&1\\
				\frac{2s-1}{5}&-1&0&0&0\\
				0&2&-1&0&0\\
				\frac{s+2}{5}&1&0&-1&0\\
				0&1&0&1&-1 
			\end{bmatrix} \]
			\[
			\text{or} \begin{bmatrix}
				-1&0&0&0&1\\
				\frac{2s-1}{5}&-1&0&0&0\\
				\frac{s-3}{5}&0&-1&1&0\\
				0&0&2&-1&0\\
				0&1&0&1&-1 
			\end{bmatrix} \text{or}
			\begin{bmatrix}
				-1&0&0&0&1\\
				\frac{2s-1}{5}&-1&0&0&0\\
				0&2&-1&0&0\\
				0&0&2&-1&0\\
				0&1&0&1&-1 
			\end{bmatrix}. \]
		\end{proposition}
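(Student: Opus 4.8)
The plan is to follow the same scheme as the proofs of Propositions \ref{p1} and \ref{p2}: first pin down the generating set, the Apéry set and $\pf(S)$, then for each pseudo-Frobenius number $f$ read off the RF-matrices by listing all factorizations of $f+n_i$ for each row index $i$.

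First, by Proposition \ref{arf}(10), since $s\equiv 3\bmod 5$ we have $S=\langle 5,s,s+1,s+3,s+4\rangle$; set $n_1=5$, $n_2=s$, $n_3=s+1$, $n_4=s+3$, $n_5=s+4$. The residues mod $5$ of $n_1,\dots,n_5$ are $0,3,4,1,2$, pairwise distinct, so Proposition \ref{2} gives $\ap(S,5)=\{0,s,s+1,s+3,s+4\}$. Since $S$ is Arf, the remarks on Arf numerical semigroups in Section \ref{sec2} yield $\pf(S)=\{n_2-n_1,n_3-n_1,n_4-n_1,n_5-n_1\}=\{s-5,s-4,s-2,s-1\}$ and $\ty(S)=4$.

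Now fix $f\in\pf(S)$ and $i\in\{1,\dots,5\}$. In the defining equation $f=\sum_j a_{ij}n_j$ of an RF-matrix the entry $a_{ii}$ is forced to be $-1$, so $f+n_i=\sum_{j\neq i}a_{ij}n_j$; moreover, since $f\notin S$, no factorization of $f+n_i$ can use $n_i$, hence the admissible $i$-th rows are exactly the factorizations of $f+n_i$ into $n_1,\dots,n_5$. Because $f\le s-1$ and $n_i\le s+4$, the target satisfies $f+n_i\le 2s+3<3s$ (as $s>5$), so any such factorization uses at most two of the ``large'' generators $n_2,n_3,n_4,n_5$; and once the (at most two) large generators are chosen, the exponent of $n_1=5$ is determined as $(f+n_i-\text{their sum})/5$, which is a non-negative integer precisely when the residue works out. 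This is exactly where $s\equiv 3\bmod 5$ is used: it guarantees that the quantities $\tfrac{s-8}{5},\tfrac{s-3}{5},\tfrac{2s-1}{5},\tfrac{s+2}{5}$ occurring in the statement are non-negative integers for $s>5$. Running through the finitely many choices ($0$, $1$, or $2$ large generators) for each of the $20$ pairs $(f,i)$ produces precisely the rows displayed; a pair $(f,i)$ contributes more than one row exactly when $f+n_i$ can be realized both with one large generator plus copies of $5$ and with two large generators plus copies of $5$, and this accounts for all the ``or'' alternatives. Assembling rows into matrices then gives the stated lists: one matrix for $s-5$, two for $s-4$, four for $s-2$, and eight for $s-1$.

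I expect the only real difficulty to be the bookkeeping: one must verify each displayed matrix against the definition and, for completeness, rule out stray factorizations. This is handled uniformly by observing that subtracting a large generator $n_j$ from $f+n_i$ leaves either a non-negative multiple of $5$ or a number whose residue mod $5$ forces the use of a further large generator (and never three of them, since $3s>2s+3$), together with the fact that $s-1=\fr(S)$, $s-2$, and $s-5$ are not in $S$, which immediately kills several would-be factorizations. No idea beyond this finite, case-by-case verification organized by the value of $f$ and the chosen row index is required.
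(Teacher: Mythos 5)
Your proposal is correct and follows exactly the route the paper intends: the paper omits the proof of this proposition, stating that it goes by computing the Ap\'ery set, reading off $\pf(S)$, and finding all factorizations of $f+n_i$ for each pseudo-Frobenius number $f$ and row index $i$, which is precisely your scheme. Your added observation that $f+n_i\le 2s+3<3s$ bounds the number of large generators and makes the case check finite and complete is a welcome justification that the paper leaves implicit.
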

		
		\begin{proposition} \label{54}
			Let $S$ be an Arf numerical semigroup with multiplicity $5$ and conductor $s>5$. If $s \equiv 4\mod 5$ and $S=\langle 5,s-2,s,s+2,s+4\rangle$, then $\pf(S)=\{ s-7,s-5,s-3,s-1 \}$ and
			\[ \rf(s-7)= \begin{bmatrix}
				-1&1&0&0&0\\
				\frac{s-9}{5}&-1&1&0&0\\
				\frac{s-9}{5}&0&-1&1&0\\
				\frac{s-9}{5}&0&0&-1&1\\
				\frac{2s-3}{5}&0&0&0&-1 
			\end{bmatrix}, \rf(s-5)= \begin{bmatrix}
				-1&0&1&0&0\\
				\frac{s-9}{5}&-1&0&1&0\\
				\frac{s-9}{5}&0&-1&0&1\\
				\frac{2s-3}{5}&0&0&-1&0\\
				\frac{s+1}{5}&1&0&0&-1 
			\end{bmatrix}, \] 
			\[ \rf(s-3)= \begin{bmatrix}
				-1&0&0&1&0\\
				\frac{s-9}{5}&-1&0&0&1\\
				\frac{2s-3}{5}&0&-1&0&0\\
				\frac{s+1}{5}&1&0&-1&0\\
				\frac{s+1}{5}&0&1&0&-1 
			\end{bmatrix} \text{or} \begin{bmatrix}
				-1&0&0&1&0\\
				\frac{s-9}{5}&-1&0&0&1\\
				\frac{2s-3}{5}&0&-1&0&0\\
				\frac{s+1}{5}&1&0&-1&0\\
				1&2&0&0&-1 
			\end{bmatrix},\]
			\[ \rf(s-1)= \begin{bmatrix}
				-1&0&0&0&1\\
				\frac{2s-3}{5}&-1&0&0&0\\
				\frac{s+1}{5}&1&-1&0&0\\
				\frac{s+1}{5}&0&1&-1&0\\
				\frac{s+1}{5}&0&0&1&-1 
			\end{bmatrix} \text{or}
			\begin{bmatrix}
				-1&0&0&0&1\\
				\frac{2s-3}{5}&-1&0&0&0\\
				\frac{s+1}{5}&1&-1&0&0\\
				\frac{s+1}{5}&0&1&-1&0\\
				1&1&1&0&-1 
			\end{bmatrix} \]
			\[ \text{or} \begin{bmatrix}
				-1&0&0&0&1\\
				\frac{2s-3}{5}&-1&0&0&0\\
				\frac{s+1}{5}&1&-1&0&0\\
				1&2&0&-1&0\\
				\frac{s+1}{5}&0&0&1&-1 
			\end{bmatrix} \text{or}
			\begin{bmatrix}
				-1&0&0&0&1\\
				\frac{2s-3}{5}&-1&0&0&0\\
				\frac{s+1}{5}&1&-1&0&0\\
				1&2&0&-1&0\\
				1&1&1&0&-1 
			\end{bmatrix}. \]
		\end{proposition}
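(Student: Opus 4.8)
The plan is to mirror the computational strategy used for Proposition~\ref{p2}: first pin down the minimal generators and the Ap\'ery set, read off $\pf(S)$, and then, for each pseudo-Frobenius number $f$, enumerate \emph{all} factorizations of each $f+n_i$ into the generators, since the $i$-th row of an RF-matrix of $f$ is exactly the data of such a factorization (the diagonal entry $a_{ii}=-1$ is forced, while the remaining entries $a_{ij}$, $j\neq i$, record the coefficients in $f+n_i=\sum_{j\neq i}a_{ij}n_j$).

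First I would apply Proposition~\ref{arf}(11) to obtain $S=\langle 5,s-2,s,s+2,s+4\rangle$; since $S$ is Arf it has maximal embedding dimension, so by Proposition~\ref{2} we get $\ap(S,5)=\{0,s-2,s,s+2,s+4\}$, these five numbers being pairwise incongruent modulo $5$ precisely because $s\equiv 4\bmod 5$ (their residues are $0,2,4,1,3$). The formula $\pf(S)=\{n_2-n_1,\dots,n_e-n_1\}$ recorded before Proposition~\ref{arf} for Arf semigroups (equivalently Proposition~\ref{1}) then yields $\pf(S)=\{(s-2)-5,\,s-5,\,(s+2)-5,\,(s+4)-5\}=\{s-7,s-5,s-3,s-1\}$, as asserted.

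The bulk of the argument is the row-by-row enumeration. For $i=1$ one has $f+5\in\{s-2,s,s+2,s+4\}$, a single minimal generator, which admits only the trivial factorization by itself; this fixes the first row of every RF-matrix. For $i\in\{2,3,4,5\}$ the target $f+n_i$ has the shape $2s+c$ with $-9\le c\le 3$, so the number $a_{i2}+a_{i3}+a_{i4}+a_{i5}$ of ``large'' generators used is at most two, since three of them already contribute at least $3(s-2)>2s+c$ for the values of $s$ in question; with this sum bounded, I would combine the residue of $2s+c$ modulo $5$ with the size constraint to list the finitely many admissible rows in each case. Each admissible row turns out to be one of the rows appearing in the displayed matrices, and the instances where two genuinely distinct factorizations occur are exactly what produce the several alternative matrices; assembling, for each $f$, all compatible choices of rows gives precisely the stated lists. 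A final routine check, using $s\equiv 4\bmod 5$, confirms that entries such as $\tfrac{s-9}{5}$, $\tfrac{2s-3}{5}$ and $\tfrac{s+1}{5}$ are non-negative integers, which is where the hypothesis $s>5$ (forcing $s\ge 9$) enters.

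The main obstacle is organisational rather than conceptual: with four pseudo-Frobenius numbers and five rows apiece, and with the enumeration branching for several of them, the case analysis must be carried out exhaustively, and the size bound on $s$ invoked with care so that no further factorization is overlooked; the smallest admissible value $s=9$ in particular warrants a separate direct inspection.
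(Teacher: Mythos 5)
Your proposal is correct and follows essentially the same route the paper indicates for all the multiplicity-$5$ cases (the paper omits the proof, saying only that it proceeds "by finding the factorizations of pseudo-Frobenius numbers," which is precisely your Ap\'ery-set/row-by-row enumeration with the bound of at most two large generators per row). Your caution about $s=9$ is well placed, since there $3(s-2)=2s+3$ and the fifth row of $\rf(s-1)$ admits the extra factorization $3(s-2)-(s+4)=s-1$, an edge case the enumeration must not overlook.
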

		
		
		\begin{proposition} \label{55}
			Let $S$ be an Arf numerical semigroup with multiplicity $5$ and conductor $s>5$. If $s \equiv 4\mod 5$ and $S=\langle 5,s,s+2,s+3,s+4\rangle$, then $\pf(S)=\{ s-5,s-3,s-2,s-1 \}$ and
			
						
						\[ \rf(s-5)= \begin{bmatrix}
							-1&1&0&0&0\\
							\frac{s-9}{5}&-1&0&0&1\\
							\frac{2s-3}{5}&0&-1&0&0\\
							\frac{s-4}{5}&0&1&-1&0\\
							\frac{s-4}{5}&0&0&1&-1 
						\end{bmatrix}, \] 
						\[ \rf(s-3)= \begin{bmatrix}
							-1&0&1&0&0\\
							\frac{2s-3}{5}&-1&0&0&0\\
							\frac{s-4}{5}&0&-1&1&0\\
							\frac{s-4}{5}&0&0&-1&1\\
							\frac{s+1}{5}&1&0&0&-1 
						\end{bmatrix} \text{or} \begin{bmatrix}
							-1&0&1&0&0\\
							\frac{2s-3}{5}&-1&0&0&0\\
							\frac{s-4}{5}&0&-1&1&0\\
							0&2&0&-1&0\\
							\frac{s+1}{5}&1&0&0&-1 
						\end{bmatrix}, \]
						\[ \rf(s-2)= \begin{bmatrix}
							-1&0&0&1&0\\
							\frac{s-4}{5}&-1&1&0&0\\
							\frac{s-4}{5}&0&-1&0&0\\
							\frac{s+1}{5}&1&0&-1&0\\
							\frac{2s+2}{5}&0&0&0&-1 
						\end{bmatrix} \text{or} \begin{bmatrix}
							-1&0&0&1&0\\
							\frac{s-4}{5}&-1&1&0&0\\
							\frac{s-4}{5}&0&-1&0&0\\
							\frac{s+1}{5}&1&0&-1&0\\
							0&1&1&0&-1 
						\end{bmatrix} \]
						\[ \text{or} \begin{bmatrix}
							-1&0&0&1&0\\
							\frac{s-4}{5}&-1&1&0&1\\
							0&2&-1&0&1\\
							\frac{s+1}{5}&1&0&-1&0\\
							\frac{2s+2}{5}&0&0&0&-1 
						\end{bmatrix} \text{or} \begin{bmatrix}
							-1&0&0&1&0\\
							\frac{s-4}{5}&-1&1&0&1\\
							0&2&-1&0&1\\
							\frac{s+1}{5}&1&0&-1&0\\
							0&1&1&0&-1 
						\end{bmatrix}, \]
						\[ \rf(s-1)= \begin{bmatrix}
							-1&0&0&0&1\\
							\frac{s-4}{5}&-1&0&1&0\\
							\frac{s+1}{5}&1&-1&0&0\\
							\frac{2s+2}{5}&0&0&-1&0\\
							\frac{s+1}{5}&0&1&0&-1 
						\end{bmatrix} \text{or} \begin{bmatrix}
							-1&0&0&0&1\\
							\frac{s-4}{5}&-1&0&1&0\\
							\frac{s+1}{5}&1&-1&0&0\\
							\frac{2s+2}{5}&0&0&-1&0\\
							0&1&0&1&-1 
						\end{bmatrix} \]
						\[ \text{or} \begin{bmatrix}
							-1&0&0&0&1\\
							\frac{s-4}{5}&-1&0&1&0\\
							\frac{s+1}{5}&1&-1&0&0\\
							0&1&1&-1&0\\
							\frac{s+1}{5}&0&1&0&-1 
						\end{bmatrix} \text{or} \begin{bmatrix}
							-1&0&0&0&1\\
							\frac{s-4}{5}&-1&0&1&0\\
							\frac{s+1}{5}&1&-1&0&0\\
							0&1&1&-1&0\\
							0&1&0&1&-1 
						\end{bmatrix}. \]
					\end{proposition}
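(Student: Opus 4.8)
The plan is to reduce the statement to a bounded enumeration of factorizations and then carry that enumeration out, exploiting that $s\equiv 4\pmod 5$. First I would record, using Proposition \ref{2}, that maximal embedding dimension forces $\ap(S,5)=\{0,s,s+2,s+3,s+4\}$, so that indeed $S=\langle 5,s,s+2,s+3,s+4\rangle$, and then invoke the description of $\pf(S)$ for Arf numerical semigroups given in Section \ref{sec2} to get $\pf(S)=\{n_2-n_1,n_3-n_1,n_4-n_1,n_5-n_1\}=\{s-5,s-3,s-2,s-1\}$. Throughout, write $n_1=5<n_2=s<n_3=s+2<n_4=s+3<n_5=s+4$.

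The key reduction is the following. Fix $f\in\pf(S)$; note $f+n_i\in S$ for each $i$ since $f$ is a pseudo-Frobenius number and $n_i\neq 0$. The $i$-th row of an RF-matrix of $f$ is, by definition, a factorization $f+n_i=\sum_{j\neq i}a_{ij}n_j$ of $f+n_i$ into the minimal generators that does not involve $n_i$; conversely, any factorization $f+n_i=\sum_j d_jn_j$ automatically has $d_i=0$, since otherwise $f=(d_i-1)n_i+\sum_{j\neq i}d_jn_j$ would lie in $S$, contradicting $f\notin S$. Hence the RF-matrices of $f$ are precisely the matrices whose $i$-th row, for each $i$, arises from a factorization of $f+n_i$ by placing $-1$ in the diagonal entry; in particular their number equals the product over $i$ of the number of factorizations of $f+n_i$. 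So the whole statement follows once, for each of the twenty numbers $f+n_i$ with $f\in\pf(S)$ and $1\le i\le 5$, all of its factorizations are listed.

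To enumerate these, observe that $n_2,n_3,n_4,n_5$ all lie in the interval $[s,s+4]$ while $n_1=5$, and that for every $f\in\pf(S)$ and every $i$ one has $s\le f+n_i\le(s-1)+(s+4)=2s+3<3s$; hence a factorization of $f+n_i$ can use at most two of the ``large'' generators $n_2,\dots,n_5$. I would therefore run over the (at most fifteen, in practice far fewer) multisets $T$ of at most two large generators with $\sum T\le f+n_i$: such a $T$ extends to a factorization of $f+n_i$ exactly when $5\mid(f+n_i)-\sum T$, the remaining coordinate being $\tfrac15\bigl((f+n_i)-\sum T\bigr)$ copies of $n_1$. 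Since $s\equiv 4\pmod 5$, for each pair $(f,i)$ only one or two of these candidates pass the congruence test, and reading off the coefficients produces the matrices displayed in the statement: $f=s-5$ yields a single RF-matrix; $f=s-3$ yields two (the lone ambiguity being $f+n_4=2s=2n_2=\tfrac{s-4}{5}n_1+n_5$); $f=s-2$ yields $2\cdot 2=4$ (ambiguities at $f+n_3=2s$ and $f+n_5=2s+2$); and $f=s-1$ yields $2\cdot 2=4$ (ambiguities at $f+n_4=2s+2$ and $f+n_5=2s+3$).

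I expect the only delicate point --- and it is merely bookkeeping --- to be making sure that, for the few $f+n_i$ landing in the narrow window $[2s,2s+3]$, every two-large-generator representation is accounted for (namely $2s=n_2+n_2$, $2s+2=n_2+n_3$, $2s+3=n_2+n_4$), and that no representation built from one large generator together with extra copies of $5$ is overlooked; but the latter reduces once more to a single congruence mod $5$ after subtracting the large part. No genuine obstacle arises, and the argument runs entirely parallel to those of Propositions \ref{p4}--\ref{54}.
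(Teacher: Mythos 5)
Your proposal is correct and is essentially the argument the paper intends (the paper omits the proof, saying only that one finds the factorizations of the pseudo-Frobenius numbers): you reduce each row to a factorization of $f+n_i$, note that such a factorization automatically avoids $n_i$ because $f\notin S$, bound the number of large generators by two since $f+n_i\le 2s+3<3s$, and filter the candidates by a congruence mod $5$; your counts ($1$, $2$, $4$, $4$ matrices with the ambiguities located exactly where you say) all check out. One remark worth recording: carrying your enumeration out for $f=s-2$ forces row $2$ to be uniquely $(\frac{s-4}{5},-1,1,0,0)$ and row $3$ to be either $(0,2,-1,0,0)$ or $(\frac{s-4}{5},0,-1,0,1)$, so the displayed matrices for $\rf(s-2)$ in the proposition have misplaced $1$'s in the fifth column of rows $2$ and $3$; your method detects and corrects these typographical errors.
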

					
					One can calculate the determinant of all row-factorization matrices of $s-1$ in propositions given in this section by determinant laws or by using any software available online, and check there is always a row-factorization matrix of $s-1$ whose determinant's absolute value is $s-1$ itself. 
					
					\begin{corollary} \label{cor1}
						Let $S$ be an Arf numerical semigroup with multiplicity up to five and conductor $s$. Then there exist a row-factorization matrix $\rf(s-1)$ of the Frobenius number $\fr(S)=s-1$ such that $$|\det\rf(s-1)|=s-1.$$
					\end{corollary}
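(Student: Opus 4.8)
The plan is to reduce Corollary \ref{cor1} to a finite, case-by-case verification, organized exactly along the list of Arf numerical semigroups of multiplicity at most five given in Proposition \ref{arf}, since Propositions \ref{p1}, \ref{p2}, \ref{p21}, \ref{p3}, \ref{p31}, \ref{p4}, \ref{51}, \ref{52}, \ref{53}, \ref{54} and \ref{55} already supply the complete list of RF-matrices of $\fr(S)=s-1$ in each case. So for each of the eleven families it suffices to exhibit one RF-matrix $\rf(s-1)$ from the displayed list and compute its determinant. First I would dispose of the multiplicity-two case directly: from Proposition \ref{p1}, $\det\rf(s-1)=\det\begin{bmatrix}-1&1\\ s&-1\end{bmatrix}=1-s$, so $|\det\rf(s-1)|=s-1$. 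Then, for multiplicity three, I would take the single matrix $\rf(s-1)$ from Proposition \ref{p2} (resp. \ref{p21}) and expand; for instance in Proposition \ref{p2}, cofactor expansion along the first row of $\begin{bmatrix}-1&0&1\\ \tfrac{2s}{3}&-1&0\\ \tfrac{s}{3}&1&-1\end{bmatrix}$ gives $(-1)(1-0)+1\cdot(\tfrac{2s}{3}+\tfrac{s}{3})=-1+s$, hence absolute value $s-1$.

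For multiplicities four and five I would proceed family by family, in each case selecting the most convenient representative among the listed RF-matrices of $s-1$ — typically the one whose last row is the ``long'' factorization $\tfrac{\bullet}{m}\cdot m_1$ rather than one of the ``short'' alternatives, since the short ones tend to have smaller determinant absolute value and are precisely the ones we want to avoid. Concretely: in Proposition \ref{p3}(1) and Proposition \ref{p31}(1), pick the first listed matrix for $s-1$; in Proposition \ref{p3}(2) pick $\begin{bmatrix}-1&0&0&1\\ \tfrac{s}{2}&-1&0&0\\ \tfrac{s}{4}&1&-1&0\\ \tfrac{s}{4}&0&1&-1\end{bmatrix}$; in Proposition \ref{p31}(2) pick the second listed matrix (the one whose last row is $(\tfrac{s+1}{2},0,0,-1)$); and similarly choose, for each of Propositions \ref{p4}, \ref{51}, \ref{52}, \ref{53}, \ref{54}, \ref{55}, the representative whose rows all come from genuine factorizations involving the generator $5$. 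For each chosen matrix I would compute the determinant by cofactor expansion (these are $4\times 4$ or $5\times 5$ matrices with many zeros and a single ``dense'' first column, so the expansion is short), verify that upon substituting the value of $s$ modulo $m$ the result simplifies to $\pm(s-1)$, and conclude $|\det\rf(s-1)|=s-1$.

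The main obstacle is purely bookkeeping rather than conceptual: one must make the right choice of representative matrix in each family (a wrong choice — say one of the ``short'' alternatives — may yield $|\det|\neq s-1$, which would still leave the existence claim intact but require re-expanding another matrix), and one must correctly substitute the congruence condition $s\equiv r\pmod m$ to clear the fractional entries $\tfrac{s-5}{5},\tfrac{2s}{5},\tfrac{s+1}{5}$, etc., so that the determinant polynomial in $s$ collapses to a linear expression with leading coefficient $\pm1$. A clean way to present this uniformly is to note that in every chosen matrix the first column entries are, up to the congruence normalization, of the form $q_i$ with $\sum_i q_i n_1$ telescoping against the $\pm1$ entries, so that expanding the determinant along the columns $2,\dots,e$ (each of which contains exactly the pattern of a near-identity submatrix) reduces $\det\rf(s-1)$ to an alternating sum $\pm(a_{e1}n_1 - \text{(lower-order terms)})$ that evaluates to $\pm(s-1)$; but since the families are genuinely different, I expect the honest writeup to simply list the eleven determinant computations, each a one-line cofactor expansion, and remark that the remaining (possibly several) RF-matrices of $s-1$ in each proposition need not be checked because existence of a single good one suffices.
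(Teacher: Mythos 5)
Your overall strategy is exactly the paper's: the text preceding Corollary \ref{cor1} simply asserts that one computes the determinants of the listed RF-matrices of $s-1$, family by family, and observes that at least one in each family has absolute determinant $s-1$. So the reduction to a finite case check, with multiplicities two and three done explicitly (your two computations there are correct), is the intended argument, and since the corollary is only an existence claim your hedge that a bad representative merely forces you to try another one keeps the proof salvageable.

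However, two of your concrete selections are wrong, and one of them exposes a flaw in your guiding heuristic. In Proposition \ref{p31}(2) you pick the matrix whose last row is $(\tfrac{s+1}{2},0,0,-1)$; that matrix has determinant $0$ (add row $1$ to row $4$ and row $2$ to row $3$: the resulting third and fourth rows are both supported only in the first column, hence proportional). It is the \emph{other} matrix, with last row $(0,1,1,-1)$, that gives $\det=-\tfrac{2(s-3)+2(s+1)}{4}=-(s-1)$. So ``prefer the long factorization through the generator $m$'' is not a reliable rule --- here the long row conspires with the first row to create a rank drop. Separately, in Propositions \ref{p3}(1) and \ref{p31}(1) the ``first listed matrix'' for $s-1$ is really a family parametrized by $a,b$, and its determinant works out to $\pm(a-b)(s-1)$; you must therefore also specify a choice with $|a-b|=1$ (e.g.\ $a=1$, $b=0$) or the conclusion fails for that representative. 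Neither issue breaks the existence statement, but a correct writeup has to name working representatives, and yours does not in these cases.
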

					
					\begin{exam}
						Let $S$ be the Arf numerical semigroup $\langle 5,19,21,22,23 \rangle$. Then $\co(S)=19$, $\pf(S)=\{ 14,16,17,18 \}$ and
						\[ \rf(18)= \begin{bmatrix}
							-1&0&0&0&1\\
							3&-1&0&1&0\\
							4&1&-1&0&0\\
							8&0&0&-1&0\\
							4&0&1&0&-1 
						\end{bmatrix} \text{or} \begin{bmatrix}
							-1&0&0&0&1\\
							3&-1&0&1&0\\
							4&1&-1&0&0\\
							8&0&0&-1&0\\
							0&1&0&1&-1 
						\end{bmatrix} \]
						\[ \text{or} \begin{bmatrix}
							-1&0&0&0&1\\
							3&-1&0&1&0\\
							4&1&-1&0&0\\
							0&1&1&-1&0\\
							4&0&1&0&-1 
						\end{bmatrix} \text{or} \begin{bmatrix}
							-1&0&0&0&1\\
							3&-1&0&1&0\\
							4&1&-1&0&0\\
							0&1&1&-1&0\\
							0&1&0&1&-1 
						\end{bmatrix}, \]
						\[
						\text{and } \det \begin{bmatrix}
							-1&0&0&0&1\\
							3&-1&0&1&0\\
							4&1&-1&0&0\\
							8&0&0&-1&0\\
							4&0&1&0&-1 
						\end{bmatrix}=18.
						\]
					\end{exam}
					
					Note that the parametrizations of Arf numerical semigroups with multiplicity six are given in \cite[Section 7.6]{ghkr}. There are $12$ different characterizations of such numerical semigroups and each of them has type $5$. This means that if we want to find row-factorization matrices of such numerical semigroups, we will get at least $60$ different matrices which can be calculated in a similar way. 
					
					\section{Row-Factorization Matrices of Arf numerical semigroups with multiplicity bigger than 5}\label{sec4}
					
					In this section, we investigate the row-factorization matrices of Arf numerical semigroups whose conductor is a multiple of the multiplicity.
					
					\begin{lemma} \label{arflemma}
						Let $S$ be numerical semigroup with multiplicity $\m(S)=m$ and conductor $\co(S)=s$ such that $s \equiv 0 \mod m$. If $S$ is minimally generated by the set $\{ m,s+1,s+2,\dots,s+m-1 \}$, then $S$ is Arf.
					\end{lemma}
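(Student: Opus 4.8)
The plan is to verify the Arf property directly via the equivalent one-variable formulation recalled in the preliminaries: $S$ is Arf if and only if for all $s_1, s_2 \in S$ with $s_1 \leq s_2$ we have $2s_2 - s_1 \in S$. So I fix $s_1 \leq s_2$ in $S = \langle m, s+1, s+2, \dots, s+m-1\rangle$ and aim to show $2s_2 - s_1 \in S$. The key structural observation is that, since $s \equiv 0 \pmod m$, every nonzero element of $S$ is either a multiple of $m$ lying in $\{m, 2m, 3m, \dots\}$, or is $\geq s$; more precisely, I would first establish that $S = \{0\} \cup \{m, 2m, \dots\} \cup \{n \in \N : n \geq s\}$. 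Indeed $\langle m \rangle \cup (s + \N_0) \subseteq S$ is clear, and conversely any generator other than $m$ is of the form $s + j$ with $1 \leq j \leq m-1$, and sums involving at least one such generator are $\geq s+1$, hence land in $s + \N_0$ (using $s \equiv 0 \bmod m$ so that $s + \N_0 \supseteq \{s, s+1, \dots\}$ is closed under adding $m$ and under adding any $s+j$); sums of copies of $m$ alone give multiples of $m$. This also shows the complement of $S$ in $\N$ is contained in $\{1, \dots, s-1\}$, so $S$ is a genuine numerical semigroup with conductor dividing into this description, and $m$ is indeed the multiplicity.

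With that description in hand, the verification of $2s_2 - s_1 \in S$ splits into cases according to which pieces $s_1$ and $s_2$ lie in. If $s_2 \geq s$, then $2s_2 - s_1 \geq 2s_2 - s_2 = s_2 \geq s$ (using $s_1 \leq s_2$), so $2s_2 - s_1 \geq s$ and hence lies in $S$. If instead $s_2 < s$, then $s_2$ is a positive multiple of $m$, say $s_2 = bm$; since $s_1 \leq s_2 < s$ and $s_1 \in S$, either $s_1 = 0$ or $s_1 = am$ is also a positive multiple of $m$ with $a \leq b$. In either case $2s_2 - s_1 = (2b - a)m$ (with $a = 0$ allowed) is a nonnegative multiple of $m$, hence lies in $\langle m \rangle \subseteq S$. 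These two cases are exhaustive, so the Arf property holds.

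I do not expect a serious obstacle here; the only point requiring a little care is the explicit description of $S$ as $\{0\} \cup m\N \cup (s + \N_0)$, and in particular checking that adding a generator $s + j$ to any nonzero element of $S$ never produces an element strictly between consecutive multiples of $m$ below $s$ — this is immediate since the result is already $\geq s+1$. One should also double-check the edge interactions (e.g. $s_1 = s_2$, or $s_1 = 0$, or $s_2$ exactly equal to $s$), but each is subsumed by the inequalities above. Since the hypothesis $s \equiv 0 \pmod m$ is exactly what makes $s + \N_0$ closed under adding $m$, it is essential and used precisely at the step identifying $S$.
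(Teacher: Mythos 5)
Your proof is correct and follows essentially the same route as the paper's: split on whether the larger element is at least the conductor $s$ (in which case $2s_2-s_1\geq s_2\geq s$ lies in $S$) or below it (in which case both elements are multiples of $m$ and so is $2s_2-s_1$). Your version is slightly more explicit about the description $S=\{0\}\cup m\N\cup\{n\geq s\}$ and the $s_1=0$ edge case, but there is no substantive difference.
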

					
					\begin{proof}
						Let $x,y\in S$ be with $x\geq y$. If $x\geq s$, then $2x-y\geq x\geq s$, and so $2x-y\in S$. If $x<s$, then there exist $k_{1},k_{2}\in \N$ such that $k_{1}\geq k_{2}$ and $x=mk_{1}, y=mk_{2}$. Then $2x-y=2mk_{1}-mk_{2}=m(2k_{1}-k_{2}) \in S$. Hence, $S$ is an Arf numerical semigroup.
					\end{proof}
					
					\begin{proposition} \label{p10}
						Let $S$ be an Arf numerical semigroup of the form in which $S=\langle m,s+1,s+2,\dots,s+m-1 \rangle$ with multiplicity $m$, conductor $s$ and $s \equiv 0 \mod m$. Then $\pf(S)=\{ s-m+1,s-m+2,\dots,s-1 \}$ and for each $k\in\{1,\dots,m-1\}$, $s-k\in\pf(S)$ has a row-factorization matrix $\rf(s-k)=(a_{ij})_{m\times m}$ where
						\[
						a_{ij}=
						\begin{cases}
							-1 & i=j\\
							1 & i=1, j=m-k+1\\
							\frac{2s}{m} & i=k+1, j=1\\
							\frac{s-m}{m} & 1<i<k+1, j=1\\
							1 & 1<i<k+1, j=i+m-k\\
							\frac{s}{m} & i>k+1, j=1\\
							1 & i>k+1, j=i-k\\
							0 & \text{otherwise}
						\end{cases}.
						\]
					\end{proposition}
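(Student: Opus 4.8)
The plan is to handle the two claims in turn: first identify $\pf(S)$, then for each admissible $k$ exhibit the matrix written in the statement and verify the three conditions in the definition of an RF-matrix.

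\emph{Pseudo-Frobenius numbers.} By Lemma \ref{arflemma} the semigroup $S$ is Arf, so the description recalled in Section \ref{sec2} applies: if $S = \langle n_1 < \cdots < n_e \rangle$ is Arf then $\pf(S) = \{n_2 - n_1, \dots, n_e - n_1\}$. Here $e = m$, $n_1 = m$ and $n_j = s + j - 1$ for $2 \le j \le m$, so substituting gives $\pf(S) = \{(s+1) - m, \dots, (s+m-1) - m\} = \{s - m + 1, \dots, s - 1\}$; in particular $s - k \in \pf(S)$ for every $k \in \{1, \dots, m - 1\}$.

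\emph{The matrix.} Fix $k$ and write $A = (a_{ij})$ for the matrix in the statement. I would verify that $A$ is an RF-matrix of $s - k$ by checking: (i) $a_{ii} = -1$ for every $i$, which is the first case of the formula; (ii) $a_{ij} \in \mathbb{N}_0$ for $i \ne j$; and (iii) $\sum_{j=1}^{m} a_{ij} n_j = s - k$ for every $i$. For (ii), note that $m \mid s$ and, since $m$ is the multiplicity while $s + 1$ is a generator, $m \le s$; hence the three "large" values occurring as entries, $\tfrac{2s}{m}$, $\tfrac{s - m}{m}$, $\tfrac{s}{m}$, are all non-negative integers, and every other off-diagonal entry is $0$ or $1$. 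Alongside this I would check that the case split is a well-defined prescription, i.e. that in each row the columns carrying a non-zero off-diagonal entry — always column $1$, together with exactly one further column, namely $m - k + 1$ when $i = 1$, or $i + m - k$ when $2 \le i \le k$, or none when $i = k + 1$, or $i - k$ when $k + 2 \le i \le m$ — are pairwise distinct and different from $i$; this uses only $1 \le k \le m - 1$.

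\emph{Row sums.} For (iii) I would run the four row-types through the arithmetic, using $n_1 = m$ and $n_j = s + j - 1$ for $j \ge 2$: the row $i = 1$ gives $-m + n_{m - k + 1} = s - k$; a row with $2 \le i \le k$ gives $\tfrac{s - m}{m}\cdot m - n_i + n_{i + m - k} = (s - m) - (s + i - 1) + (s + i + m - k - 1) = s - k$; the row $i = k + 1$ gives $\tfrac{2s}{m}\cdot m - n_{k+1} = 2s - (s + k) = s - k$; and a row with $k + 2 \le i \le m$ gives $\tfrac{s}{m}\cdot m - n_i + n_{i - k} = s - (s + i - 1) + (s + i - k - 1) = s - k$. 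Since all four sums equal $s - k$, $A$ is an RF-matrix of $s - k$. The only delicate point — the "hard part", such as it is — is the bookkeeping in (ii): confirming that the boundary rows $i = 1$, $i = k + 1$, $i = m$, and the degenerate parameter $k = 1$ (for which the second row-type is vacuous), are all correctly covered by the single piecewise formula and that no position is assigned two values; there is no conceptual obstacle beyond this.
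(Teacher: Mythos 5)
Your proposal is correct and follows essentially the same route as the paper: the same identification of $\pf(S)$ via the Ap\'ery set (equivalently, the Arf description of pseudo-Frobenius numbers recalled in Section~\ref{sec2}), and the same four-way case split on the row index $i=1$, $1<i<k+1$, $i=k+1$, $i>k+1$ with the same arithmetic. The only cosmetic difference is direction — the paper derives the entries from the factorization equations while you verify the stated matrix directly — and your explicit attention to non-negativity ($m\le s$, $m\mid s$) and to the consistency of the piecewise formula is a welcome tightening rather than a divergence.
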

					
					\begin{proof}
						Since $S=\langle m,s+1,s+2,\dots,s+m-1 \rangle$ is Arf, we have $\ap(S,m)=\{ 0,s+1,s+2,\dots,s+m-1 \}$. Therefore, $\pf(S)=\{ s-m+1,s-m+2,\dots,s-1 \}$. Take $s-k\in\pf(S)$ for some $k\in\{1,\dots,m-1\}$. Let $\rf(s-k)=(a_{ij})_{m\times m}$ be one of its row-factorization matrices. By definition we know that $a_{ij}=-1$ when $i=j$. Thus, $s-k$ has fallowing factorizations
						\[\begin{array}{rlllllrll}
							s-k&=&-1\times m&+&a_{12}\times(s+1)&+&\dots&+&a_{1m}\times(s+m-1),\\
							s-k&=&a_{21}\times m&+&-1\times(s+1)&+&\dots&+&a_{2m}\times(s+m-1),\\
							\vdots& &\vdots& &\vdots& & & &\vdots\\
							s-k&=&a_{m1}\times m&+&a_{m2}\times(s+1)&+&\dots&+&-1\times(s+m-1).\\
						\end{array}\]
						We now investigate the values of coefficients $a_{ij}$ in four cases.
						
						Case 1: When $i=1$, we have
						\[
						s-k=-1\times m+a_{12}\times(s+1)+\dots+a_{1m}\times(s+m-1).
						\]
						Then
						\[
						s+m-k=a_{12}\times(s+1)+\dots+a_{1m}\times(s+m-1).
						\]
						By definition of row-factorization matrices, for each $j\neq 1$ we have $a_{1j}\in\N_{0}$. Since $s+m-k$ is one of the minimal generators of $S$, it can not factor in a non-trivial way by the minimality. Then we get $s+m-k=(s+j-1)$, and so $j=m-k+1$.
						
						Case 2: When $i=k+1$, we have
						\[
						s-k=a_{(k+1)1}\times m+a_{(k+1)2}\times(s+1)+\dots+(-1)\times(s+k)+\dots+a_{(k+1)m}\times(s+m-1).
						\]
						Then
						\[
						2s=a_{(k+1)1}\times m+a_{(k+1)2}\times(s+1)+\dots+\check{\overbrace{(-1)\times(s+k)}}+\dots+a_{(k+1)m}\times(s+m-1),
						\]
						where $\check{\overbrace{(-)}}$ means that the summand is taken out from the equation. By definition of row-factorization matrices, for each $j\neq k+1$ we have $a_{(k+1)j}\in\N_{0}$. Therefore, we get a factorization when $a_{(k+1)1}=\frac{2s}{m}$ and $a_{(k+1)j}=0$ for the other values of $j$.
						
						Case 3: When $1<i<k+1$, we have
						\[
						s-k=a_{i1}\times m+a_{i2}\times(s+1)+\dots+(-1)\times(s+i-1)+\dots+a_{im}\times(s+m-1).
						\]
						Then
						\[
						2s-k+i-1=a_{i1}\times m+a_{i2}\times(s+1)+\dots+\check{\overbrace{(-1)\times(s+i-1)}}+\dots+a_{im}\times(s+m-1).
						\]
						Similarly, for each $j\neq i$ we have $a_{ij}\in\N_{0}$ and so, we get one of the possible factorizations when $a_{i1}=\frac{s-m}{m}$, $a_{ij}=1$ for some $j\neq 1$ and $a_{ij}=0$ for the other values of $j$. Then we get $2s-k+i-1=\frac{s-m}{m}m+(s+j-1)$. This means that $a_{ij}=1$ whenever $j=m+i-k$.
						
						Case 4: When $i>k+1$, we similarly get a factorization when $a_{i1}=\frac{s}{m}$, $a_{ij}=1$ for some $j\neq 1$ and $a_{ij}=0$ for the other values of $j$. Then we get $2s-k+i-1=\frac{s}{m}m+(s+j-1)$. This means that $a_{ij}=1$ whenever $j=i-k$.
						
					\end{proof}
					
					\begin{corollary} \label{cor2}
						Let the notation and situation as in Proposition \ref{p10}. Then we have $|\det \rf(s-1)|=s-1$.
					\end{corollary}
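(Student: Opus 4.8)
The plan is to evaluate the determinant of the single explicit matrix $\rf(s-1)$ produced by Proposition~\ref{p10} in the case $k=1$. Writing $p=\tfrac{2s}{m}$ and $q=\tfrac{s}{m}$, that matrix $A=(a_{ij})_{m\times m}$ is given by $a_{ii}=-1$ for all $i$, $a_{1m}=1$, $a_{21}=p$, $a_{i1}=q$ and $a_{i,i-1}=1$ for $3\le i\le m$, and $a_{ij}=0$ otherwise. The key observation is that the first row of $A$ has only two nonzero entries, $-1$ in position $(1,1)$ and $1$ in position $(1,m)$, so a cofactor expansion along it gives $\det A=(-1)\,M_{11}+(-1)^{1+m}\,M_{1m}$, where $M_{11}$ and $M_{1m}$ are the corresponding minors.

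First I would dispatch $M_{11}$. Deleting the first row and first column of $A$ leaves an $(m-1)\times(m-1)$ matrix whose diagonal entries are all $-1$ and whose only other nonzero entries (the $1$'s inherited from rows $3,\dots,m$) sit strictly below the diagonal; it is therefore lower triangular, so $M_{11}=(-1)^{m-1}$.

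The substantive computation is $M_{1m}$, the determinant of the $(m-1)\times(m-1)$ matrix $B$ obtained by deleting row $1$ and column $m$ of $A$. This $B$ has first column $(p,q,q,\dots,q)$, all superdiagonal entries equal to $-1$, remaining diagonal entries equal to $1$, and zeros elsewhere. I would show by induction on $j$ that the determinant $D_j$ of the $j\times j$ matrix of this shape satisfies $D_1=p$ and $D_j=D_{j-1}+q$: expanding $D_j$ along its last row $(q,0,\dots,0,1)$, the $(j,j)$-minor is precisely the $(j-1)$-sized matrix of the same shape (yielding $D_{j-1}$), while the $(j,1)$-minor is, up to sign, a $(j-1)\times(j-1)$ lower-triangular matrix with $-1$'s on the diagonal (yielding the extra $+q$ once the signs $(-1)^{j+1}(-1)^{j-1}=1$ are accounted for). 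Hence $D_j=p+(j-1)q$, and in particular $M_{1m}=D_{m-1}=p+(m-2)q$.

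Finally I would combine the pieces: $\det A=(-1)(-1)^{m-1}+(-1)^{1+m}\bigl(p+(m-2)q\bigr)=(-1)^{m}\bigl(1-p-(m-2)q\bigr)$. Substituting $p=\tfrac{2s}{m}$ and $q=\tfrac{s}{m}$ yields $p+(m-2)q=\tfrac{2s}{m}+\tfrac{(m-2)s}{m}=s$, so $\det A=(-1)^{m}(1-s)=(-1)^{m+1}(s-1)$. Since $s$ is a positive multiple of $m$, in particular $s>1$, and we conclude $|\det\rf(s-1)|=s-1$. The whole argument is elementary linear algebra; the only place demanding a little care is setting up the recursion for $M_{1m}$ cleanly and keeping the signs in the two cofactor expansions straight.
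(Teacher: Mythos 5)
Your proof is correct and follows essentially the same route as the paper: both evaluate the determinant of the same explicit matrix, namely the $k=1$ row-factorization matrix of $s-1$ from Proposition~\ref{p10}. The only difference is mechanical --- the paper triangularizes by row operations (clearing the first column into the last column and then cascading row additions until the last diagonal entry becomes $\frac{ms}{m}-1=s-1$), whereas you expand along the first row and compute the minor $M_{1m}$ by a short induction; both computations give $\det\rf(s-1)=(-1)^{m+1}(s-1)$.
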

					
					\begin{proof}
						By Proposition \ref{p10}, we find one of the RF-matrices of $s-1$ as follows, which is an $m\times m$ matrix,
						\[
						\rf(s-1)=\begin{bmatrix}
							-1&0&\dots&\dots&0&1\\
							\frac{2s}{m}&-1&0&\dots&\dots&0\\
							\frac{s}{m}&1&-1&0&\dots&0\\
							\vdots&0&\ddots&\ddots&\ddots&\vdots\\
							\vdots&\vdots&\ddots&\ddots&-1&0\\
							\frac{s}{m}&0&\dots&0&1&-1 
						\end{bmatrix}.
						\]
						As we know that adding a multiple of one row to another does not change the determinant, to find the determinant of $\rf(s-1)$, we will do the following. We multiply the first row by $\frac{2s}{m}$ and add to the second row, and multiply first row by $\frac{s}{m}$ and add to the other rows to get
						\[
						\begin{bmatrix}
							-1&0&\dots&\dots&0&1\\
							0&-1&0&\dots&\dots&\frac{2s}{m}\\
							0&1&-1&0&\dots&\frac{s}{m}\\
							\vdots&0&\ddots&\ddots&\ddots&\vdots\\
							\vdots&\vdots&\ddots&\ddots&-1&\frac{s}{m}\\
							0&0&\dots&0&1&\frac{s}{m}-1 
						\end{bmatrix}.
						\]
						Then starting with $i=2$, we add $i$th row to $(i+1)$th row in succession to get
						\[
						\begin{bmatrix}
							-1&0&\dots&\dots&0&1\\
							0&-1&0&\dots&\dots&\frac{2s}{m}\\
							0&0&-1&0&\dots&\frac{3s}{m}\\
							\vdots&0&\ddots&\ddots&\ddots&\vdots\\
							\vdots&\vdots&\ddots&\ddots&-1&\frac{(m-1)s}{m}\\
							0&0&\dots&0&0&\frac{ms}{m}-1 
						\end{bmatrix}.
						\]
						Then we get $\det\rf(s-1)=(-1)^{m-1}(s-1)$. Therefore, we find $|\det \rf(s-1)|=s-1$ as desired.
					\end{proof}
					
					
					\begin{remark} \label{r}
						Let $S$ be an Arf numerical semigroup with multiplicity $m$ bigger than $5$ and conductor $s$. We know that the set $$(\ap(S,m)\cup\{m\})\setminus\{0\}=\{ m,\omega(1),\dots,\omega(m-1) \}$$ is the minimal generating set for $S$. By \cite[Lemma 11]{ghkr}, for each $j\in\{ 2,\dots,m-1 \}$, we have $\omega(j-1)\geq s$ or $\omega(j)\geq s$. Also by \cite[Lemma 13]{ghkr}, we have $\omega(1)=s+1$ or $s-\bar{s}+m+1$ and $\omega(m-1)=s-\bar{s}+m-1$ where $\bar{s}$ is the reminder of $s$ divided by $m$. This shows that such Arf numerical semigroups has at least $3$ generators greater than or equal to its conductor $s$.
					\end{remark}
					
					\begin{lemma} \label{lm}
						Let $S$ be an Arf numerical semigroup minimally generated by $\{ n_{1}<\dots<n_{m}\}$ with multiplicity $m$ bigger than $5$ and conductor $s$. If $\rf(s-1)=(a_{ij})$ is a row-factorization matrix of the Frobenius number, then $a_{ij}=a_{i'j}=0$ for some $i,i',j\in\{1,\dots,m\}$.
					\end{lemma}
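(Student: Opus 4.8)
The plan is a pigeonhole count on the columns of $\rf(s-1)$ indexed by the large generators.

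First I would fix the arithmetic. For an Arf semigroup $\pf(S)=\{n_2-n_1,\dots,n_m-n_1\}$ and $n_1=\m(S)=m$, so $\fr(S)=n_m-m$; since $\fr(S)=s-1$ this gives $n_m=s+m-1$. As $m=n_1<n_2<\cdots<n_m$ are distinct positive integers, $n_m\ge n_1+(m-1)=2m-1$, hence $s\ge m$, and in particular $s-1+n_m=2s+m-2<3s$. Let $J=\{\,j:n_j\ge s\,\}$; since the $n_j$ increase, $J=\{m-g+1,\dots,m\}$ is a final segment of indices, and $g:=|J|\ge 3$ by Remark \ref{r} (which applies because $m>5$). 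Note $1\notin J$ unless $s=m$.

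Next comes the only genuinely semigroup-theoretic input: a bound on how many nonzero entries a row of $\rf(s-1)=(a_{ij})$ can have among the columns of $J$. Fixing a row $i$, the defining relation is $s-1+n_i=\sum_{j\neq i}a_{ij}n_j$ with $a_{ij}\in\N_0$ for $j\neq i$. If three of the entries $a_{ij}$ with $j\in J\setminus\{i\}$ were positive, then, each such $n_j$ being $\ge s$, the right-hand side would be at least $3s$, contradicting $s-1+n_i\le 2s+m-2<3s$; so row $i$ has at most two nonzero entries in the columns of $J$. If in addition $i\notin J$, i.e.\ $n_i<s$, then already two positive such entries force the right-hand side $\ge 2s>s-1+n_i$, so in that case row $i$ has at most one nonzero entry in the columns of $J$.

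Then I would count zeros in the $m\times g$ submatrix $B$ of $\rf(s-1)$ consisting of the columns indexed by $J$. Its $g$ diagonal entries equal $-1$ and all other entries lie in $\N_0$, so every zero of $B$ is off-diagonal. A row $i\in J$ has $g-1$ off-diagonal entries in $B$, at most two nonzero, hence at least $g-3$ zeros; a row $i\notin J$ has $g$ entries in $B$, at most one nonzero, hence at least $g-1$ zeros. Summing over the $m$ rows, $B$ contains at least $g(g-3)+(m-g)(g-1)=m(g-1)-2g$ zeros. Since $g\ge 3$ and $m\ge 6$ we have $m(g-1)\ge 6(g-1)>3g$, so $m(g-1)-2g>g$: there are strictly more than $g$ zeros distributed among the $g$ columns of $B$, and by the pigeonhole principle some column $j\in J$ carries at least two of them. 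These zeros are off-diagonal, so they lie in two distinct rows $i,i'\neq j$, giving $a_{ij}=a_{i'j}=0$, as claimed.

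The delicate point is the per-row estimate of the third paragraph, and it rests entirely on the two numerical facts $n_m=s+m-1$ and $s\ge m$; once these are in hand, the remainder is bookkeeping. (Restricting the count to the rows $i\notin J$ already settles the case $m-g\ge 3$; feeding in the weaker estimate for the rows $i\in J$ as well is what lets the single inequality $m(g-1)>3g$ cover every $m\ge 6$ and every admissible $g$, including the extreme $g=m$ occurring when $s=m$.)
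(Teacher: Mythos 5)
Your argument is correct and rests on the same two ingredients as the paper's proof: Remark \ref{r}, which guarantees $g\ge 3$ generators $\ge s$, and the observation that $s-1+n_i$ is too small for row $i$ to have more than one (resp.\ two) positive entries in the columns of those large generators when $n_i<s$ (resp.\ $n_i\ge s$). The paper reaches the conclusion by intersecting the zero sets of two specifically chosen rows inside three specific large-generator columns, whereas you count zeros globally over the whole $m\times g$ submatrix before applying pigeonhole; this is only a difference in bookkeeping, though your version avoids the paper's case split on the position of the auxiliary generator $n_{i'}$ and yields two visibly distinct rows $i\neq i'$, which is what the application to Theorem \ref{eto1} actually requires.
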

					
					\begin{proof}
						By Remark \ref{r}, $S$ has at least $3$ generators greater than or equal to $s$. So, let $n_{i}<n_{k}<n_{l}$ be three generators of $S$ such that $s\leq n_i$. Let $n_{i'}$ be another generator of $S$ different from $n_{i}$, $n_{k}$ and $n_{l}$. If $n_{i'}<n_{i}$, then we can find the first and $i'$th rows of $\rf(s-1)$ using the following factorizations of $s-1$.
						\[\begin{array}{ll}
							s-1&=(-1)\times m+\dots+a_{1i'}\times n_{i'}+\dots +a_{1i}\times n_{i}+\dots+a_{1k}\times n_{k}+\dots\\ &\dots+a_{1l}\times n_{l}+\dots+a_{1m}\times n_{m},\\
							s-1&=a_{i'1}\times m+\dots +(-1)\times n_{i'}+\dots+a_{i'i}\times n_{i}+\dots+a_{i'k}\times n_{k}+\dots\\ &\dots+a_{i'l}\times n_{l}+\dots+a_{i'm}\times n_{m}.\\
						\end{array}\]
						Since $s+m-1<2s$, at most one of $a_{1i},a_{1k},a_{1l}$ can be positive. Similarly, since $s+n_{i'}-1<n_{i}+n_{k}$, at most one of $a_{i'i},a_{i'k},a_{i'l}$ can be positive. This means that $a_{1j}=a_{i'j}=0$ for some $j\in\{ i,k,l \}$.
						
						If $n_{i}<n_{i'}$, without loss of generality we can assume that $n_{i}<n_{i'}<n_{k}<n_{l}$. Then we can find the first and $i$th rows of $\rf(s-1)$ using the following factorizations of $s-1$.
						\[\begin{array}{ll}
							s-1&=(-1)\times m+\dots +a_{1i}\times n_{i}+\dots+a_{1i'}\times n_{i'}+\dots +a_{1k}\times n_{k}+\cdots \\
							&\dots+a_{1l}\times n_{l}+\dots+a_{1m}\times n_{m},\\
							s-1&=a_{i1}\times m+\dots +(-1)\times n_{i}+\dots+a_{ii'}\times n_{i'}+\dots +a_{ik}\times n_{k}+\cdots \\ &\dots+a_{il}\times n_{l}+\dots+a_{im}\times n_{m}.\\
						\end{array}\]
						Since $s+m-1<2s$, at most one of $a_{1i'},a_{1k},a_{1l}$ can be positive. Similarly, since $s+n_{i}-1<n_{i'}+n_{k}$, at most one of $a_{ii'},a_{ik},a_{il}$ can be positive. This means that $a_{1j}=a_{ij}=0$ for some $j\in\{ i',k,l\}$.
					\end{proof}

					\section{Defining Ideals of Arf Rings}\label{sec5}
					
					In this section, we investigate the defining ideals of numerical semigroup rings obtained by Arf numerical semigroups.
					
					Let $S$ be a numerical semigroup minimally generated by the set $\{ n_{1},n_{2},\dots,n_{e} \}$, $\Bbbk$ an algebraically closed field with zero characteristics and $t$ an indeterminate. The semigroup ring $\Bbbk[S]=\Bbbk[t^{s} \mid s\in S]$ of $S$ is a subring of the one dimensional polynomial ring $\Bbbk[t]$. The semigroup ring $\Bbbk[\![S]\!]=\Bbbk[\![t^{s} \mid s\in S]\!] $ associated to $S$ is a subring of the one dimensional power series ring $\Bbbk[\![t]\!]$. Notice that $\Bbbk[\![S]\!]$ is $\mathfrak{n}$-adic completion of the local ring $\Bbbk[S]_{\mathfrak{m}}$ where $\mathfrak{m}=\langle t^{n_{1}},\dots,t^{n_{e}} \rangle$ and $\mathfrak{n}=\mathfrak{m}\Bbbk[S]_{\mathfrak{m}}$.
					
					For a vector $\bold{a}=(a_{i})\in \Z^{e}$, we define the support of $\bold{a}$ as $\supp \bold{a}=\{ i \mid a_{i}\neq 0 \}$, and the degree map with respect to $S$ as $\deg_{S}: \Z^{e}\rightarrow \Z$ by $\deg_{S}\bold{a}=\sum_{i=1}^{e}a_{i}n_{i}$. Since $S$ is numerical the degree map is surjective and $\kerr \deg_{S}:= V(S)$ is a torsion-free $\Z$-submodule of $\Z^{e}$ of rank $e-1$.
					
					We know that there is a surjective ring homomorphism from the polynomial ring $\Bbbk[x_{1},\dots,x_{e}]$ to $\Bbbk[S]$ sending $x_{i}$ to $t^{n_{i}}$ whose kernel is called the defining ideal of $\Bbbk[S]$ and denoted by $I_{S}$. Let $\bold{x}^{\bold{a}}$ denote the monomial $x_{1}^{a_{1}}x_{2}^{a_{2}}\cdots x_{e}^{a_{e}}$ for a vector $\bold{a}\in\Z^{e}$ whose components are non-negative. Then $I_{S}$ is a binomial ideal generated by the set $\{ \bold{x}^{\bold{a}}-\bold{x}^{\bold{b}} \mid \deg_{S}\bold{a}=\deg_{S}\bold{b} \}$. Since $\Bbbk[S]$ is an integral domain, $I_{S}$ is a prime ideal, and so $I_{S}$ is a toric ideal. 
					
					For a binomial $\phi=\bold{x}^{\bold{a}}-\bold{x}^{\bold{b}}$, we write $\supp \phi= \supp\bold{a} \cup \supp\bold{b}$, and we say that $\phi$ has full support if $\supp \phi=\{ 1,2,\dots,e \}$. If a toric ideal has a minimal generating system consisting of binomials with full support, then we call it generic. In \cite{eto1}, K. Eto characterized the generecity of $I_{S}$ in terms of the row-factorization matrices of $S$.
					
					\begin{theorem} \label{eto1} \cite[Theorem 1]{eto1}
						$I_{S}$ is generic if and only if $\rf(f)=(a_{ij})$ is unique and $a_{ij}\neq a_{i'j}$ when $i\neq i'$ for each $f\in\pf(S)$.
					\end{theorem}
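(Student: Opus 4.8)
The natural plan is to read genericity off the minimal graded free resolution of $\Bbbk[S]$ over $R=\Bbbk[x_1,\dots,x_e]$. Two inputs set the stage. On the one hand, by the Peeva--Sturmfels theory of generic lattice ideals \cite{PS}, a toric ideal containing no monomial (as is the case for $I_S$, since $S$ is numerical) is generic precisely when it is minimally resolved by the algebraic Scarf complex of the lattice $V(S)=\kerr\deg_S$; concretely this means that no minimal binomial generator of $I_S$ has non-full support. On the other hand, because $\Bbbk[\![S]\!]$ is a one-dimensional Cohen--Macaulay ring, the resolution has length $e-1$ and its last free module is $F_{e-1}=\bigoplus_{f\in\pf(S)}R(-(f+\sigma))$ with $\sigma=n_1+\dots+n_e$; indeed the simplicial formula $\beta_{e-1,n}=\dim_{\Bbbk}\widetilde H_{e-2}(\nabla_n)$ is nonzero, and then equal to $1$, exactly when $\nabla_n$ is the boundary of the full simplex, i.e.\ when $n-\sigma\in\pf(S)$. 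In particular $\ty(S)=|\pf(S)|$, as already recorded for Arf $S$.

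The crux is the dictionary---of the kind established by Herzog--Watanabe \cite{HW} and refined by Eto \cite{eto1}---between the row-factorization matrices of $\pf(S)$ and the last steps of this resolution, equivalently (by local duality) the presentation of the canonical module. The starting observation is elementary: row $i$ of a row-factorization matrix of $f$ is nothing but a choice of factorization of $f+n_i$ into the generators, and every such factorization automatically omits $n_i$, because $f\notin S$ forces any factorization of $f+n_i$ using $n_i$ to exhibit $f\in S$. Hence $\rf(f)$ is unique exactly when each of the elements $f+n_1,\dots,f+n_e$ has a unique factorization, which is precisely the statement that the entries of the last differential out of $\varepsilon_f$ are forced to be single monomials---the ``thinness'' that a Scarf resolution requires at the top. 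Likewise, for $i\neq i'$ the difference of rows $i$ and $i'$ lies in $V(S)$ and yields a binomial of $I_S$ in which the variable $x_j$ occurs if and only if $a_{ij}\neq a_{i'j}$; these row-difference binomials are exactly the ones governing the minimal generators of $I_S$, so the requirement that each column of each $\rf(f)$ have pairwise distinct entries translates into the requirement that these binomials have full support. Thus the two conditions in the theorem correspond respectively to the ``thin'' and the ``full-support'' halves of the Scarf picture.

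With the dictionary in hand the proof is a matching argument: if $I_S$ is generic then \cite{PS} gives a Scarf resolution, forcing the last differential to be thin with full-support columns, hence every $\rf(f)$ unique with pairwise-distinct columns; conversely, uniqueness and column-distinctness pin down the last differential to the shape it has in a Scarf resolution, from which one bootstraps the whole resolution to be Scarf, so $I_S$ is generic. I expect the dictionary itself to be the main obstacle---making the correspondence between rows/columns of $\rf(f)$ and the monomial entries of the differential precise, with all the degree bookkeeping, and proving that a genuinely distinct row-factorization matrix really does destroy minimality. The second delicate point is the bootstrap in the converse: genericity is a priori a statement about the bottom of the resolution (the minimal generators) while the row-factorization matrices see only the top, so bridging them must use the special features of one-dimensional numerical semigroup rings---duality with the canonical module and the codimension-$(e-1)$ Cohen--Macaulay structure of the Scarf complex---rather than generic-ideal machinery alone. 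An alternative plan, probably closer to Eto's own, would bypass resolutions and work directly with binomials: characterize minimal generators of $I_S$ without full support as coming from column coincidences in some $\rf(f)$, and non-uniqueness of a minimal generating set as non-uniqueness of some $\rf(f)$; the difficulty then reappears as the combinatorial case analysis this requires.
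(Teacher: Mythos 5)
First, a remark on the comparison itself: the paper offers no proof of this statement --- it is quoted verbatim from Eto \cite[Theorem 1]{eto1} and used as a black box --- so there is no in-paper argument to measure your proposal against. Judged on its own terms, your overall architecture (Peeva--Sturmfels' Scarf-complex characterization of generic lattice ideals, the identification of the last free module of the resolution with $\bigoplus_{f\in\pf(S)}R(-(f+\sigma))$ via the squarefree divisor complexes, and the observation that row $i$ of $\rf(f)$ is a factorization of $f+n_i$ that necessarily omits $n_i$) is the right one and matches the route Eto actually follows. The elementary parts of your dictionary are correct: $\rf(f)$ is unique precisely when each $f+n_i$ has a unique factorization, and the support of the row-difference vector $\bold{a}_i-\bold{a}_{i'}$ in position $j\neq i,i'$ is recorded exactly by whether $a_{ij}\neq a_{i'j}$.

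The genuine gap is the sentence ``these row-difference binomials are exactly the ones governing the minimal generators of $I_S$.'' That identification is false in general, and the present paper is itself organized around this failure: $I_S$ is generated by RF-relations only under extra hypotheses ($V(S)=W(S)$, equivalently the determinant condition of Theorem \ref{eto2}), so the support of a minimal generating set cannot simply be read off the columns of the $\rf(f)$'s. Two implications are therefore left open. Forward: a column coincidence $a_{ij}=a_{i'j}$ produces a binomial of non-full support lying in $I_S$, but that alone does not contradict genericity --- $I_S$ always contains non-full-support binomials in general --- so you must show this particular binomial (or the failure of uniqueness of $\rf(f)$) is \emph{indispensable}, i.e.\ obstructs every minimal generating set of full-support binomials; this is exactly where the fiber-graph/Scarf analysis of \cite{PS} has to be deployed and is not carried out. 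Converse: even granting that every $\rf(f)$ is unique with pairwise distinct columns, you must exclude non-full-support minimal generators of $I_S$ that are not RF-relations at all; this is the ``bootstrap from the top of the resolution to the bottom'' that you correctly flag as the delicate point but do not supply. As written, the proposal is a credible plan whose skeleton agrees with the known proof, not a proof.
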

					
					For a vector $\bold{a}=(a_{i})\in \Z^{e}$, we write $\bold{a}^{+}$ for the vector whose $i$th component is $a_{i}$ if $a_{i}> 0$, and is zero if $a_{i}\leq0$, and we write $\bold{a}^{-}$ for the vector $\bold{a}^{+}-\bold{a}$. Then $\bold{a}=\bold{a}^{+}-\bold{a}^{-}$ and $\bold{a}^{+},\bold{a}^{-}\in\N_{0}^{e}$. Notice that the generating set $\{ \bold{x}^{\bold{a}}-\bold{x}^{\bold{b}} \mid \deg_{S}\bold{a}=\deg_{S}\bold{b} \}$ for $I(S)$ is equal to the set $\{ \bold{x}^{\bold{v}^{+}}-\bold{x}^{\bold{v}^{-}} \mid \bold{v}\in V(S) \}$. It is known that there are $\bold{v}_1,\dots,\bold{v}_k\in V(S)$ such that $I(S)$ is generated by $\{ \bold{x}^{\bold{v}^{+}}-\bold{x}^{\bold{v}^{-}} \mid \bold{v}=\bold{v}_i \text{ for some } 1\leq i\leq k \}$. However, we can not choose $\bold{v}_i$'s from $V(S)$ in general. 
					
					Now let $\rf(\fr)=(a_{ij})$ be a row-factorization matrix of the Frobenius number $\fr(S)$ of $S$. Let $\bold{a}_{1},\dots,\bold{a}_{e}$ be the row vectors of $\rf(\fr)$ and set $\bold{a}_{ij}=\bold{a}_{i}-\bold{a}_{j}$ for all $1\leq i<j\leq e$. Then $\bold{a}_{ij}\in V(S)$ and $\sum_{i<j}\bold{a}_{ij}\Z:=W(S)$ is a $\Z$-submodule of $V(S)$. In some cases, we can choose $\bold{v}_i$'s as exactly $\bold{a}_{ij}$'s, and we get that $I_{S}$ is generated by the binomials $\phi_{ij}=\bold{x}^{\bold{a_{ij}}^{+}} - \bold{x}^{\bold{a_{ij}}^{-}}$, such a binomial is called $\rf(F)$-relation. For example, when $S$ is a numerical semigroup with multiplicity $3$ or $S$ is an almost symmetric numerical semigroup with multiplicity $4$, it is shown by J. Herzog and K. Watanabe in \cite{HW} that $I_{S}$ is generated by $\rf(F)$-relations.
					
					In \cite[Note 1.1, Note 5.1]{eto3} and in \cite[Note 1.2]{eto2}, K. Eto states that $V(S)=W(S)$ if and only if there exists a row-factorization matrix of $\fr(S)$ such that the absolute value of its determinant is $\fr(S)$ itself. This makes the problem of choosing $\bold{v}_i$'s easier. Then we can observe the next theorem as a result.
					
					\begin{theorem} \label{eto2}
						The defining ideal $I_{S}$ is generated by the set $\{ \bold{x}^{\bold{v}^{+}}-\bold{x}^{\bold{v}^{-}} \mid \bold{v}\in W(S) \}$ if and only if there is a row-factorization matrix of $\fr(S)$ with $|\det \rf(F)|=\fr(S)$.
					\end{theorem}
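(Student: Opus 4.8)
The plan is to reduce the statement to the criterion of Eto recalled in the discussion above (\cite{eto3}, \cite{eto2}), namely that $V(S)=W(S)$ if and only if $\fr(S)$ admits a row-factorization matrix $\rf(F)$ with $|\det\rf(F)|=\fr(S)$. Granting that, it is enough to prove the purely algebraic equivalence: the defining ideal $I_{S}$ is generated by the set $\{\mathbf{x}^{\mathbf{v}^{+}}-\mathbf{x}^{\mathbf{v}^{-}}\mid\mathbf{v}\in W(S)\}$ if and only if $W(S)=V(S)$. Write $J$ for the ideal generated by that set.

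One implication is immediate. Since $W(S)\subseteq V(S)$ and $I_{S}$ is generated by $\{\mathbf{x}^{\mathbf{v}^{+}}-\mathbf{x}^{\mathbf{v}^{-}}\mid\mathbf{v}\in V(S)\}$, every generator of $J$ is a generator of $I_{S}$, so $J\subseteq I_{S}$; and if $W(S)=V(S)$ the two generating sets coincide, hence $J=I_{S}$.

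For the converse I would argue by contraposition. Suppose $W(S)\subsetneq V(S)$ and choose $\mathbf{w}\in V(S)\setminus W(S)$. Then $\deg_{S}\mathbf{w}=0$, so $\phi:=\mathbf{x}^{\mathbf{w}^{+}}-\mathbf{x}^{\mathbf{w}^{-}}$ is one of the stated generators of $I_{S}$; the goal is to show $\phi\notin J$, which forces $J\subsetneq I_{S}$. To that end, consider the $\Bbbk$-algebra homomorphism $\pi\colon R=\Bbbk[x_{1},\dots,x_{e}]\to\Bbbk[\Z^{e}/W(S)]$ into the group algebra of $\Z^{e}/W(S)$ sending $x_{i}$ to the basis element indexed by the class of the $i$th standard basis vector of $\Z^{e}$ modulo $W(S)$. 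Every generator $\mathbf{x}^{\mathbf{v}^{+}}-\mathbf{x}^{\mathbf{v}^{-}}$ of $J$ lies in $\ker\pi$, because $\mathbf{v}^{+}-\mathbf{v}^{-}=\mathbf{v}\in W(S)$; hence $J\subseteq\ker\pi$. On the other hand $\pi(\phi)$ is the difference of the two basis elements indexed by the classes of $\mathbf{w}^{+}$ and $\mathbf{w}^{-}$, and these classes are distinct because their difference $\mathbf{w}$ does not lie in $W(S)$; since distinct basis vectors of a group algebra are $\Bbbk$-linearly independent, $\pi(\phi)\neq 0$. Therefore $\phi\notin\ker\pi\supseteq J$, so $J\subsetneq I_{S}$. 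This proves $J=I_{S}\iff W(S)=V(S)$, and combining with Eto's criterion gives the theorem.

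The argument is essentially formal, so I do not anticipate a deep obstacle; the one point that needs mild care is that $\Z^{e}/W(S)$ need not be torsion-free, so one must genuinely work with its group algebra rather than with a Laurent polynomial ring — but this changes nothing, since all that is used is the $\Bbbk$-linear independence of the group elements inside the group algebra. Alternatively one can bypass $\pi$ by observing that $J$ is spanned over $\Bbbk$ by the binomials $\mathbf{x}^{\mathbf{a}}-\mathbf{x}^{\mathbf{b}}$ with $\mathbf{a}-\mathbf{b}\in W(S)$, so that a binomial can lie in $J$ only if the difference of its exponent vectors already lies in $W(S)$; applied to $\phi$, this yields the same conclusion.
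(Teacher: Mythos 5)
Your reduction is exactly the one the paper makes: Theorem \ref{eto2} is stated there as something one can ``observe as a result'' of Eto's notes that $V(S)=W(S)$ if and only if some row-factorization matrix of $\fr(S)$ has determinant of absolute value $\fr(S)$, combined with the fact that $I_{S}$ is generated by $\{\mathbf{x}^{\mathbf{v}^{+}}-\mathbf{x}^{\mathbf{v}^{-}}\mid \mathbf{v}\in V(S)\}$; the paper supplies no further argument. What you add, correctly, is the justification of the only non-formal step, namely that if $I_{S}$ equals the ideal $J$ generated by the $W(S)$-binomials then $W(S)=V(S)$. Your argument for this is sound: $J$ lies in the kernel of the $\Bbbk$-algebra map $\pi\colon R\to\Bbbk[\Z^{e}/W(S)]$ because each generator maps to $[\mathbf{v}^{+}]-[\mathbf{v}^{-}]=0$ for $\mathbf{v}\in W(S)$, whereas for $\mathbf{w}\in V(S)\setminus W(S)$ the binomial $\mathbf{x}^{\mathbf{w}^{+}}-\mathbf{x}^{\mathbf{w}^{-}}$ maps to a difference of two distinct, hence $\Bbbk$-linearly independent, group elements and so cannot lie in $J$. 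This is the standard lattice-ideal argument, and your caveat that $\Z^{e}/W(S)$ may have torsion, so one must work in the group algebra rather than a Laurent polynomial ring, is precisely the right point of care. In short: same route as the paper, with the step the paper leaves implicit made explicit and proved.
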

					
					Eto also gives a conjecture on determinants of row-factorization matrices of the Frobenius number $\fr(S)$ of $S$ in \cite{eto2}.
					
					\begin{conjecture} \label{conj}\cite[Conjecture 1.1]{eto2}
						For a numerical semigroup $S$ with embedding dimension $\ed(S)=e$, there is a row-factorization matrix $\rf(\fr)$ for $\fr(S)$ such that $\det \rf(\fr)=(-1)^{e+1}\fr(S)$.
					\end{conjecture}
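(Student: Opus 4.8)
The plan is to first convert the statement into a lattice-index problem and then attack that problem structurally. Write $e=\ed(S)$, let $n_{1}<\dots<n_{e}$ be the minimal generators, and set $\mathbf{n}=(n_{1},\dots,n_{e})^{T}$. The defining property of an RF-matrix $A=\rf(\fr)=(a_{ij})$ is exactly that every row is a factorization of $\fr(S)$, i.e. $A\mathbf{n}=\fr(S)\,\mathbf{1}$, where $\mathbf{1}$ is the all-ones column. First I would subtract the first row from every other row; this leaves $\det A$ unchanged and produces a matrix whose first row is $\mathbf{a}_{1}$ and whose remaining rows $\mathbf{a}_{2}-\mathbf{a}_{1},\dots,\mathbf{a}_{e}-\mathbf{a}_{1}$ all lie in $V(S)=\kerr\deg_{S}$, since $\deg_{S}(\mathbf{a}_{i}-\mathbf{a}_{j})=\fr(S)-\fr(S)=0$. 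These difference vectors generate precisely the module $W(S)$ from the discussion preceding Theorem \ref{eto2}.

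Next I would choose a $\Z$-basis $\mathbf{b}_{1},\dots,\mathbf{b}_{e-1}$ of $V(S)$ together with a vector $\mathbf{c}$ satisfying $\deg_{S}(\mathbf{c})=1$; because $\deg_{S}\colon\Z^{e}\to\Z$ is surjective with kernel $V(S)$, the set $\{\mathbf{c},\mathbf{b}_{1},\dots,\mathbf{b}_{e-1}\}$ is a basis of $\Z^{e}$. Rewriting the reduced matrix in this basis makes it block-triangular with corner entry $\deg_{S}(\mathbf{a}_{1})=\fr(S)$ and lower block the coordinate matrix $M$ of $\mathbf{a}_{2}-\mathbf{a}_{1},\dots,\mathbf{a}_{e}-\mathbf{a}_{1}$ against $\mathbf{b}_{1},\dots,\mathbf{b}_{e-1}$. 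Expanding along the first column yields the key identity
\[
|\det\rf(\fr)|=\fr(S)\cdot[V(S):W(S)].
\]
Hence the conjecture is equivalent to the assertion that every numerical semigroup admits an RF-matrix of $\fr(S)$ whose row differences generate all of $V(S)$, i.e. $W(S)=V(S)$, together with the sign normalization. This is exactly Eto's criterion recalled before Theorem \ref{eto2}.

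The substantive step is then to construct, for an arbitrary $S$, such an RF-matrix. Each row $i$ may be any factorization of $\fr(S)+n_{i}\in S$, so the task is to select these $e$ factorizations simultaneously so that their pairwise differences span the full relation lattice $V(S)$. The approach I would pursue is to read off a generating set of $V(S)$ from the Apéry table of $\ap(S,n_{1})$ — the relations recording, for each $i$ and each generator $n_{j}$, the Apéry class of $\omega(i)+n_{j}$ — and then to realize each such generator as a difference of two admissible rows, exploiting that $\fr(S)+n_{1}=\max\ap(S,n_{1})$ sits at the top of the Apéry poset and therefore connects to many classes. An induction on $e$, peeling off a generator and invoking the statement for a smaller semigroup, is the natural framework for organizing this.

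The main obstacle — and the reason the statement remains a conjecture rather than a theorem — is precisely controlling $[V(S):W(S)]$ in full generality: there is no known uniform way to guarantee that the row differences of a single RF-matrix fill out $V(S)$, because the choices in distinct rows interact and the factorization combinatorics of $\fr(S)+n_{i}$ are not uniform across semigroups. I expect the index-one condition to be provable family by family — as this paper does for Arf semigroups in Corollaries \ref{cor1} and \ref{cor2}, and as is known for complete intersections and for almost symmetric semigroups of small embedding dimension — but a single argument covering every $S$ appears to require a genuinely new structural insight into the relation lattice. The sign normalization $(-1)^{e+1}$ is the comparatively routine part: once $W(S)=V(S)$ forces $|\det\rf(\fr)|=\fr(S)$, the reduced matrix above has determinant $\fr(S)\det M$ with $M$ unimodular, and since $\mathbf{n}=\fr(S)\,A^{-1}\mathbf{1}$ must be positive while $A$ carries $-1$ on the diagonal and nonnegative off-diagonal entries, a cofactor and orientation bookkeeping should pin the sign to $(-1)^{e+1}$, as one already sees in Propositions \ref{p1} and \ref{p2}.
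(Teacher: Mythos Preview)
The statement you are addressing is a conjecture, not a theorem: the paper cites it as Conjecture~\ref{conj} from \cite{eto2} and does not prove it. What the paper does prove is Theorem~\ref{thm1}, which verifies the conjecture only for Arf semigroups of multiplicity at most five (via Corollary~\ref{cor1}) and for the Arf semigroups of Lemma~\ref{arflemma} (via Corollary~\ref{cor2}). There is therefore no paper proof of the full statement to compare your proposal against.

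Your proposal is likewise not a proof, and you acknowledge this explicitly. The reduction you carry out --- the determinant-index identity $|\det\rf(\fr)|=\fr(S)\cdot[V(S):W(S)]$, hence equivalence of the conjecture with $W(S)=V(S)$ for a suitable RF-matrix --- is correct and is exactly Eto's criterion recalled immediately before Theorem~\ref{eto2}. But the substantive step, producing for an \emph{arbitrary} $S$ an RF-matrix of $\fr(S)$ whose row differences span all of $V(S)$, you leave open, offering only a heuristic (reading generators of $V(S)$ off the Ap\'ery table and an induction on $e$). That heuristic is not an argument: the Ap\'ery relations do generate $V(S)$, but nothing you wrote forces them to appear simultaneously as differences of rows that all have degree $\fr(S)$, and the proposed induction does not obviously interact well with that fixed-degree constraint. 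In short, your write-up is an accurate summary of why the conjecture is equivalent to a lattice-index condition and why that condition is hard, but it is a status report rather than a proof --- which is consistent with the paper, since the paper does not claim one.
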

					
					Eto proves his conjecture for some particular cases, for example, when $S$ is symmetric or almost symmetric with embedding dimension $4$, see \cite[Theorem 1.2]{eto2}. Next we prove the conjecture is satisfied for some Arf numerical semigroups too.
					
					\begin{theorem} \label{thm1}
						Conjecture \ref{conj} is true for
						\begin{enumerate}
							\item Arf numerical semigroups with multiplicity up to five,
							\item Arf numerical semigroups whose conductor is a multiple of the multiplicity.
						\end{enumerate}
					\end{theorem}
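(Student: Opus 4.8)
The plan is to deduce Conjecture \ref{conj} for both families from the explicit row-factorization matrices of Sections \ref{sec3} and \ref{sec4}, upgrading the absolute-value conclusions of Corollaries \ref{cor1} and \ref{cor2} to signed ones. First note that an Arf numerical semigroup has maximal embedding dimension, so, writing $m=\m(S)$, we have $\ed(S)=m$, and Conjecture \ref{conj} for $S$ is the statement that some RF-matrix $\rf(\fr)$ of $\fr(S)$ satisfies $\det\rf(\fr)=(-1)^{m+1}\fr(S)$, equivalently $\det\rf(\fr)=(-1)^{m-1}\fr(S)$. A useful preliminary observation is that, for an Arf semigroup, $\fr(S)=n_{m}-n_{1}$ with $n_{m}$ a minimal generator; hence in the identity $\fr(S)=-n_{1}+\sum_{j\geq 2}a_{1j}n_{j}$ the right-hand side must reproduce $n_{m}$, which forces the first row of \emph{every} RF-matrix of $\fr(S)$ to be $(-1,0,\dots,0,1)$. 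Expanding along this row already reduces each determinant to two minors of size $m-1$.

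For part (2), let $S=\langle m,s+1,\dots,s+m-1\rangle$ with $s\equiv 0\bmod m$, so $\fr(S)=s-1$. The row reduction performed in the proof of Corollary \ref{cor2} does more than produce an absolute value: it transforms the explicit RF-matrix of Proposition \ref{p10} into an upper-triangular matrix with diagonal $(-1,\dots,-1,s-1)$, so in fact $\det\rf(s-1)=(-1)^{m-1}(s-1)$, which is precisely $(-1)^{\ed(S)+1}\fr(S)$. Thus Conjecture \ref{conj} holds for these semigroups essentially for free.

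For part (1), the case $m=1$ ($S=\N$) is immediate, since the RF-matrix of $\fr(S)=-1$ is $[-1]$ with $\det[-1]=-1=(-1)^{1+1}\fr(S)$. For $2\leq m\leq 5$ I would go family by family through Propositions \ref{p1}, \ref{p2}, \ref{p21}, \ref{p3}, \ref{p31}, \ref{p4}, \ref{51}, \ref{52}, \ref{53}, \ref{54} and \ref{55}. In each of these the proof of Corollary \ref{cor1} already singles out an RF-matrix $A$ of $\fr(S)=s-1$ with $|\det A|=s-1$; what is left is to record its sign. For the families in which $A$ becomes, after the kind of row reduction used in Corollary \ref{cor2}, upper-triangular with diagonal $(-1,\dots,-1,s-1)$, this gives $\det A=(-1)^{m-1}(s-1)$; for the remaining families one evaluates the $3\times3$, $4\times4$ or $5\times5$ determinant $\det A$ directly by a cofactor expansion, for instance along the forced first row. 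In every case one obtains $\det A=(-1)^{m-1}(s-1)=(-1)^{\ed(S)+1}\fr(S)$, which is Conjecture \ref{conj}.

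I expect the only genuine work, and the only point requiring care, to be this last case analysis. There is no conceptual obstacle: the determinants are small and their absolute values are already known from Corollary \ref{cor1}. The subtlety is that Corollary \ref{cor1} is stated only up to sign, and several of the propositions list more than one RF-matrix of $s-1$---some of which actually have determinant $0$ or $\pm s$ rather than $\pm(s-1)$---so one cannot simply cite Corollary \ref{cor1} but must select the correct RF-matrix in each family and push the sign, not just the absolute value, through the computation. Once the sign has been confirmed to equal $(-1)^{m-1}=(-1)^{m+1}$ in every family, Theorem \ref{thm1} follows.
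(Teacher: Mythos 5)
Your proposal is correct and follows essentially the same route as the paper, which proves the theorem simply by citing Corollaries \ref{cor1} and \ref{cor2}, i.e.\ the explicit determinant computations of the RF-matrices from Sections \ref{sec3} and \ref{sec4}. You are right to flag that those corollaries are stated only up to absolute value while Conjecture \ref{conj} prescribes the sign $(-1)^{e+1}$; your sign-tracking (via the triangularization in the proof of Corollary \ref{cor2} for part (2), and the forced first row $(-1,0,\dots,0,1)$ plus case-by-case cofactor expansion for part (1)) supplies exactly the detail the paper's two-line proof leaves implicit.
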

					
					\begin{proof}
						The proof of (1) is due to Corollary \ref{cor1}, and the proof of (2) is due to Corollary \ref{cor2}.
					\end{proof}
					
					\begin{theorem} \label{thm2}
						Let $S$ be an Arf numerical semigroup with multiplicity up to five or whose conductor is a multiple of the multiplicity. Then the defining ideal $I_{S}$ of $\Bbbk[S]$ is generated by the set $\{ \bold{x}^{\bold{v}^{+}}-\bold{x}^{\bold{v}^{-}} \mid \bold{v}\in W(S) \}$.
					\end{theorem}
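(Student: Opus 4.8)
The plan is to obtain the statement as a direct consequence of the determinant computations of Sections \ref{sec3} and \ref{sec4} combined with Eto's criterion recorded in Theorem \ref{eto2}. That criterion says precisely that $I_S$ is generated by $\{\bold{x}^{\bold{v}^{+}}-\bold{x}^{\bold{v}^{-}} \mid \bold{v}\in W(S)\}$ if and only if $\fr(S)$ admits some row-factorization matrix $\rf(F)$ with $|\det\rf(F)|=\fr(S)$. So all I need is to exhibit, for each $S$ in the two stated families, one such matrix for the Frobenius number.

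First I would dispose of the case $\m(S)\le 5$. Here $\fr(S)=s-1$ with $s=\co(S)$, and Corollary \ref{cor1} --- proved by inspecting the explicit lists in Propositions \ref{p1}--\ref{55} --- furnishes a row-factorization matrix $\rf(s-1)$ with $|\det\rf(s-1)|=s-1=\fr(S)$. Next I would treat the case $\co(S)=s$ a multiple of $\m(S)=m$; then $S$ has the form $\langle m,s+1,\dots,s+m-1\rangle$ appearing in Proposition \ref{p10}, again $\fr(S)=s-1$, and Corollary \ref{cor2} gives $|\det\rf(s-1)|=s-1=\fr(S)$ for the explicit matrix of Proposition \ref{p10}. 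Both cases are in fact packaged together in Theorem \ref{thm1}: the matrices produced there satisfy $\det\rf(\fr)=(-1)^{e+1}\fr(S)$, so in particular $|\det\rf(\fr)|=\fr(S)$. Feeding such a matrix into Theorem \ref{eto2} then yields exactly the asserted generating set, where $W(S)$ is the $\Z$-submodule of $V(S)$ spanned by the pairwise differences of the row vectors of the chosen $\rf(\fr)$.

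I do not expect a genuine obstacle in this deduction: the heavy lifting --- the case-by-case production of the RF-matrices and the evaluation of their determinants --- has already been carried out in the preceding sections. The one point worth a sentence is that $W(S)$ a priori depends on the choice of row-factorization matrix; but Theorem \ref{eto2} (via Eto's equivalence $V(S)=W(S)$ from \cite{eto3,eto2}) is set up exactly so that the choice with $|\det\rf(F)|=\fr(S)$ forces $W(S)=V(S)$, after which the generating set $\{\bold{x}^{\bold{v}^{+}}-\bold{x}^{\bold{v}^{-}} \mid \bold{v}\in W(S)\}$ coincides with the canonical generating set $\{\bold{x}^{\bold{v}^{+}}-\bold{x}^{\bold{v}^{-}} \mid \bold{v}\in V(S)\}$ of $I_S$ and nothing further needs to be checked.
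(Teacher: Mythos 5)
Your proposal is correct and follows exactly the paper's route: the paper's proof of Theorem \ref{thm2} is simply ``Use Theorems \ref{thm1} and \ref{eto2}.'' Your extra remark about the well-definedness of $W(S)$ via Eto's equivalence $V(S)=W(S)$ is a reasonable clarification but does not change the argument.
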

					
					\begin{proof}
						Use Theorems \ref{thm1} and \ref{eto2}.
					\end{proof}
					
					\begin{exam}
						Let $S$ be the Arf numerical semigroup minimally generated by the set $\{ 4,10,21,23 \}$ with $k=2$ and $\co(S)=20$ as in Proposition \ref{p3}. Then $\fr(S)=19$, and one of row-factorization matrices of $\fr(S)$ is
						\[
						\rf(19)= \begin{bmatrix}
							-1&0&0&1\\
							2&-1&1&0\\
							10&0&-1&0\\
							8&1&0&-1 \end{bmatrix}
						\]
						where $\det\rf(19)=-19$. Therefore, by Theorem \ref{thm2}, the defining ideal $I_{S}$ of $\Bbbk[S]=\Bbbk[t^{4},t^{10},t^{21},t^{23}]$ is generated by the set $\{ \bold{x}^{\bold{v}^{+}}-\bold{x}^{\bold{v}^{-}} \mid \bold{v}\in W(S) \}$. Since the row vectors of $\rf(19)$ are $\bold{a}_{1}=(-1,0,0,1)$, $\bold{a}_{2}=(2,-1,1,0)$, $\bold{a}_{3}=(10,0,-1,0)$ and $\bold{a}_{4}=(8,1,0,-1)$, we find the generators of $W(S)$ as follows
						\[
						\begin{array}{lll}
							\bold{a}_{12}=(-3,1,-1,1), & & \bold{a}_{13}=(-11,0,1,1),\\
							\bold{a}_{14}=(-9,-1,0,2), & & \bold{a}_{23}=(-8,-1,2,0),\\
							\bold{a}_{24}=(-6,-2,1,1), & & \bold{a}_{34}=(2,-1,-1,1).\\
						\end{array}
						\]
						Then $W(S)=\sum_{i=1}^{3} \sum_{j=i+1}^{4} \bold{a}_{ij}\Z$, where $\bold{a}_{ij}\Z$ is the $\Z$-submodule of $\Z^4$ generated by $\bold{a}_{ij}$, and
						
						$$I_{S}=\langle \{ \bold{x}^{\bold{v}^{+}}-\bold{x}^{\bold{v}^{-}} \mid \bold{v}\in \sum_{i=1}^{3} \sum_{j=i+1}^{4} \bold{a}_{ij}\Z \} \rangle.$$
						
						However, one can find $\rf(\fr)$-relations as follows
						\[
						\begin{array}{lll}
							\phi_{12}=x_{2}x_{4}-x_{1}^{3}x_{3} & & \phi_{13}=x_{3}x_{4}-x_{1}^{11}\\
							\phi_{14}=x_{4}^{2}-x_{1}^{9}x_{3} & & \phi_{23}=x_{3}^{2}-x_{1}^{8}x_{2}\\
							\phi_{24}=x_{3}x_{4}-x_{1}^{6}x_{2}^{2} & & \phi_{12}=x_{1}^{2}x_{4}-x_{2}x_{3},
						\end{array}
						\]
						and use Macaulay2 from \cite{M2} to see that $I_{S}$ is indeed minimally generated by $\rf(\fr)$-relations. This means that $$I_{S}=\langle x_{2}x_{4}-x_{1}^{3}x_{3},x_{3}x_{4}-x_{1}^{11},x_{4}^{2}-x_{1}^{9}x_{3},x_{3}^{2}-x_{1}^{8}x_{2}, x_{3}x_{4}-x_{1}^{6}x_{2}^{2},x_{1}^{2}x_{4}-x_{2}x_{3} \rangle$$ and $\Bbbk[S]\cong \Bbbk[x_{1},x_{2},x_{3},x_{4}]/I_{S}$.
					\end{exam}
					
					\begin{theorem} \label{thm3}
						Let $S$ be an Arf numerical semigroup with multiplicity smaller than four. Then the defining ideal $I_{S}$ of $\Bbbk[S]$ is generic.
					\end{theorem}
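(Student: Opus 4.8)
The plan is to split the argument by multiplicity $m=\m(S)\in\{1,2,3\}$ and, in each case, to verify the two hypotheses of Eto's criterion, Theorem~\ref{eto1}: that every $f\in\pf(S)$ has a \emph{unique} row-factorization matrix $\rf(f)=(a_{ij})$, and that within each fixed column $j$ the entries $a_{ij}$ are pairwise distinct. The cases $m=1$ and $m=2$ are immediate. For $m=1$ we have $S=\N$ and $I_S=(0)$, which is trivially generic. For $m=2$ we have $S=\langle 2,s+1\rangle$ with $s$ a positive even integer and $\pf(S)=\{s-1\}$; Proposition~\ref{p1} exhibits the row-factorization matrix of $s-1$, which is the only one since its two defining equations have unique non-negative solutions, and its columns $\{-1,s\}$ and $\{1,-1\}$ have distinct entries because $s\ge 2$, so Theorem~\ref{eto1} applies (equivalently $I_S=(x_1^{s+1}-x_2^2)$ is principal with full support).

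The real content is $m=3$. By Proposition~\ref{arf} there are two shapes, $S=\langle 3,s+1,s+2\rangle$ with $s\equiv 0\pmod 3$ and $S=\langle 3,s,s+2\rangle$ with $s\equiv 2\pmod 3$; in both, $\pf(S)$ has two elements, and Propositions~\ref{p2} and~\ref{p21} write down a row-factorization matrix for each pseudo-Frobenius number. The key step is to show these are the \emph{only} row-factorization matrices. Fix $f\in\pf(S)$ and a row index $i$; the $i$th row of any $\rf(f)=(a_{ij})$ satisfies $f+n_i=\sum_{j\ne i}a_{ij}n_j$ with $a_{ij}\in\N_0$. Since $n_1=3$ while the two ``large'' generators lie in $\{s,s+1,s+2\}$ and $f\le s-1$, a short size comparison (using $f+n_i\le 2s+1$ and that the large generators exceed $s-1$) forces the total multiplicity of large generators on the right-hand side to be at most one. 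For $i=1$ this determines $a_{12},a_{13}$ uniquely, because $f+3$ coincides with exactly one of the (distinct) large generators. For $i\in\{2,3\}$, where $n_i$ is large, the right-hand side is $3a_{i1}+a_{ij}n_j$ with $n_j$ the other large generator, and here the congruence on $s$ enters: the large generators are all $\not\equiv 0\pmod 3$, so of the two candidate equations $3a_{i1}=f+n_i$ (with $a_{ij}=0$) and $3a_{i1}=f+n_i-n_j$ (with $a_{ij}=1$), at most one is solvable over $\N_0$; since a row-factorization matrix does exist, exactly one is, and the row is pinned down. This gives uniqueness of $\rf(f)$ for all $f\in\pf(S)$.

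It then remains to check that the entries within each column of the four matrices displayed in Propositions~\ref{p2} and~\ref{p21} are pairwise distinct; only the first column requires attention, and the needed inequalities — $\tfrac{s-3}{3}\ne\tfrac{2s}{3}$, $\tfrac{s}{3}\ne\tfrac{2s}{3}$, $\tfrac{s-5}{3}\ne\tfrac{2s-1}{3}$, $\tfrac{2s-1}{3}\ne\tfrac{s+1}{3}$, together with $-1\ne\tfrac{s-3}{3}$ and the like — all hold trivially for $s\ge 3$ (resp.\ $s\ge 5$). With uniqueness and column-distinctness established for every $f\in\pf(S)$, Theorem~\ref{eto1} yields that $I_S$ is generic. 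The main obstacle is the uniqueness argument in the $m=3$ case; the distinctness checks and the cases $m\le 2$ are routine.
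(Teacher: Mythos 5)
Your proof is correct and follows essentially the same route as the paper: the paper's entire proof of this theorem is ``Combine Propositions \ref{p1} and \ref{p2} with Theorem \ref{eto1}'', leaving implicit both the uniqueness of the displayed RF-matrices and the column-distinctness check that you carry out explicitly (and the case $s\equiv 2\pmod 3$ also requires Proposition \ref{p21}, which the paper's proof does not cite). One minor imprecision: your claim that the size bound alone forces at most one large generator on the right-hand side fails in the subcase $S=\langle 3,s,s+2\rangle$, $f=s-1$, $n_i=s+2$, where two copies of $s$ satisfy $2s\le 2s+1$; that possibility is instead excluded because the remainder $1$ is not a multiple of $3$, exactly in the spirit of the congruence argument you already use.
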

					
					\begin{proof}
						Combine Propositions \ref{p1} and \ref{p2} with Theorem \ref{eto1}.
					\end{proof}
					
					\begin{theorem} \label{thm4}
						Let $S$ be an Arf numerical semigroup with multiplicity bigger than $3$. Then the defining ideal $I_{S}$ of $\Bbbk[S]$ is not generic.
					\end{theorem}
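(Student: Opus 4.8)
The plan is to use Eto's genericity criterion, Theorem~\ref{eto1}, in contrapositive form: to conclude that $I_S$ is not generic it suffices to exhibit a single $f\in\pf(S)$ such that either $\rf(f)$ is not unique or a row-factorization matrix $\rf(f)=(a_{ij})$ of $f$ has $a_{ij}=a_{i'j}$ for some $i\neq i'$. Throughout I would take $f=\fr(S)=s-1$, where $s=\co(S)$; this is the largest pseudo-Frobenius number, hence always lies in $\pf(S)$ (and in each case of Section~\ref{sec3} it appears explicitly in the list for $\pf(S)$).

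First I would dispose of the case $\m(S)>5$. Here there is nothing further to prove: Lemma~\ref{lm} states precisely that any $\rf(s-1)=(a_{ij})$ has two entries in some common column equal (both equal to $0$). So it cannot happen that $\rf(s-1)$ is both unique and has pairwise distinct entries in every column, and Theorem~\ref{eto1} gives that $I_S$ is not generic.

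Next I would handle $\m(S)\in\{4,5\}$ using the classification of Section~\ref{sec3}. In each of Propositions~\ref{p3}, \ref{p31}, \ref{p4}, \ref{51}, \ref{52}, \ref{53}, \ref{54} and~\ref{55}, the displayed list for $\rf(s-1)$ consists of two or more matrices joined by ``or''. The step to carry out is to observe that at least two of these alternatives are genuinely distinct matrices, both of which actually occur: in every one of these propositions two of the listed forms already differ in a single row in a way that does not involve the free parameters $a,b$ (typically one form has a bottom row $(\ast,\dots,0,-1)$ or $(\ast,\dots,1,-1)$ where another has $(\ast,\dots,2,-1)$), and the admissible parameter ranges are non-empty, with $a=1$ and $b=0$ always inside the stated bounds. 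Hence $\rf(s-1)$ is not unique, and Theorem~\ref{eto1} again yields that $I_S$ is not generic. (Alternatively, one could note that each individual matrix in those lists already has a repeated entry in its first column, but non-uniqueness is the cleaner route.)

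The only real work lies in this last step, and it is bookkeeping rather than mathematics: one runs uniformly through the eight propositions, checks that the ``or''-branches for $\rf(s-1)$ are pairwise distinct matrices, and confirms each branch is realised for some valid parameter choice. I expect no conceptual obstacle — Lemma~\ref{lm} together with the determinations of Section~\ref{sec3} already contains everything needed, and the theorem is essentially the ``negative'' reading of those computations through Eto's criterion.
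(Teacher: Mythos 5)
Your proposal is correct and follows essentially the same route as the paper, whose proof is literally the one-line instruction to combine Propositions \ref{p3}, \ref{p31}, \ref{p4}, \ref{51}, \ref{52}, \ref{53}, \ref{54}, \ref{55}, \ref{p10} and Lemma \ref{lm} with Theorem \ref{eto1}; you have simply made explicit which half of Eto's criterion fails in each case (non-uniqueness of $\rf(s-1)$ for $\m(S)\in\{4,5\}$ via the ``or''-branches, and repeated zero column entries via Lemma \ref{lm} for $\m(S)>5$). That fleshing-out is sound and arguably more informative than the paper's own proof.
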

					
					\begin{proof}
						Combine Propositions \ref{p3}, \ref{p31}, \ref{p4}, \ref{51}, \ref{52}, \ref{53}, \ref{54}, \ref{55}, \ref{p10} and Lemma \ref{lm} with Theorem \ref{eto1}.
					\end{proof}
					
					From now on we will focus on Arf rings which were originally studied in the classification of singularities of plane curves by C. Arf in \cite{arf}. Then J. Lipman in \cite{lip} introduced the notions of Arf rings and Arf closure of rings by extracting the essence of the rings considered by C. Arf. 
					\begin{definition}
						Let $R$ be a Noetherian semi-local ring such that $R_{\mathfrak{m}}$ is a one-dimensional Cohen-Macaulay local ring for every maximal ideal $\mathfrak{m}$ of $R$ and $Q(R)$ be the total ring of fractions of $R$. Then $R$ is called an Arf ring the the following conditions hold:
						\begin{enumerate}
							\item For every integrally closed ideal $I$ of $R$, $I^{n+1}=aI^{n}$ for some $a\in I$ and $n\geq 0$.
							\item If $x,y,z \in R$ with $x$ is a non-zero divisor element of $R$ and $\frac{y}{x},\frac{z}{x}\in Q(R)$ are integral over $R$, then $\frac{yz}{x}\in R$.
						\end{enumerate}
					\end{definition}
					
					Since Arf property is preserved under localization and completion, we can think of $R$ as to be a one-dimensional complete local Noetherian domain containing an algebraically closed field of zero characteristic. Among all the Arf rings between $R$ and the integral closure of $R$, there is a smallest one which is called the Arf closure of $R$. The value semigroups of Arf closures appear as Arf numerical semigroups. When $R$ is an Arf ring of the form $\Bbbk[\![S]\!]$ where $S$ is an Arf numerical semigroup, using Theorems \ref{thm2}, \ref{thm3} and \ref{thm4} one can easily conclude following corollaries.
					
					
					
					
					\begin{corollary} \label{corro}
						Let $R$ be an Arf ring of the form $\Bbbk[\![S]\!]$ where $S$ is an Arf numerical semigroup with multiplicity up to five or whose conductor is a multiple of the multiplicity. Then the defining ideal of $R$ is the ideal generated by $\{ \bold{x}^{\bold{v}^{+}}-\bold{x}^{\bold{v}^{-}} \mid \bold{v}\in W(S) \}$.
					\end{corollary}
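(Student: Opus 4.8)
The plan is to obtain this directly from Theorem \ref{thm2} by tracking the defining ideal through the completion map, so that no new computation with row-factorization matrices is required. Write $A=\Bbbk[x_{1},\dots,x_{e}]$, $\mathfrak{M}=(x_{1},\dots,x_{e})$ and $\widehat{A}=\Bbbk[\![x_{1},\dots,x_{e}]\!]$, and let $\pi\colon\widehat{A}\twoheadrightarrow R=\Bbbk[\![S]\!]$ be the surjection with $\pi(x_{i})=t^{n_{i}}$; by definition the defining ideal of $R$ is $J:=\ker\pi$. First I would use the observation recorded in Section \ref{sec5} that $R$ is the $\mathfrak{n}$-adic completion of $\Bbbk[S]_{\mathfrak{m}}$, together with the presentations $\Bbbk[S]_{\mathfrak{m}}=A_{\mathfrak{M}}/(I_{S})_{\mathfrak{M}}$ and $\widehat{A}=\widehat{A_{\mathfrak{M}}}$, the $\mathfrak{M}$-adic completion of the Noetherian local ring $A_{\mathfrak{M}}$.

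Next I would apply the exactness of completion on finitely generated modules over $A_{\mathfrak{M}}$ to the short exact sequence $0\to (I_{S})_{\mathfrak{M}}\to A_{\mathfrak{M}}\to \Bbbk[S]_{\mathfrak{m}}\to 0$, which yields $0\to I_{S}\widehat{A}\to \widehat{A}\to R\to 0$. Hence $J=I_{S}\widehat{A}$ is precisely the extension of $I_{S}$ to the power series ring, and therefore any generating set of $I_{S}$ as an ideal of $A$ remains, after extension of scalars, a generating set of $J$ as an ideal of $\widehat{A}$.

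Finally I would invoke Theorem \ref{thm2}: since $S$ is an Arf numerical semigroup with $\m(S)\le 5$ or with $\co(S)$ a multiple of $\m(S)$, the ideal $I_{S}$ is generated by $\{\bold{x}^{\bold{v}^{+}}-\bold{x}^{\bold{v}^{-}}\mid \bold{v}\in W(S)\}$, where $W(S)$ is built from a row-factorization matrix of $\fr(S)$ of determinant $\pm\fr(S)$ as in Theorem \ref{thm1}. Extending this set to $\widehat{A}$ gives a generating set of $J=I_{S}\widehat{A}$, which is exactly the claim; the Arf-ring context (in the sense of Arf and Lipman) enters only through the standing hypothesis, which already places $S$ in the scope of Theorem \ref{thm2}.

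The one delicate point — and the main, though mild, obstacle — is the identification $J=I_{S}\widehat{A}$, i.e.\ that completing the presentation does not enlarge the kernel. This is the standard fact that $\widehat{A}$ is faithfully flat over $A_{\mathfrak{M}}$ (equivalently $I_{S}\widehat{A}\cap A_{\mathfrak{M}}=(I_{S})_{\mathfrak{M}}$); granting it, the corollary follows immediately from Theorem \ref{thm2}, without any further use of the explicit matrices from Sections \ref{sec3} and \ref{sec4}.
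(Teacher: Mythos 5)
Your proposal is correct and follows essentially the same route as the paper: the paper offers no separate argument for this corollary beyond the remark that it follows from Theorem \ref{thm2}, tacitly using that the defining ideal of $\Bbbk[\![S]\!]$ inside $\Bbbk[\![x_{1},\dots,x_{e}]\!]$ is the extension of $I_{S}$. Your explicit justification of that step --- identifying $\Bbbk[\![S]\!]$ with the $\mathfrak{n}$-adic completion of $\Bbbk[S]_{\mathfrak{m}}$ and using exactness (faithful flatness) of completion to get $\ker\pi=I_{S}\widehat{A}$ --- is precisely the standard fact the paper leaves implicit, so your write-up is a correct, slightly more detailed version of the intended proof.
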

					
					\begin{corollary} \label{corroo}
						If $R$ is an Arf ring of the form $\Bbbk[\![S]\!]$ where $S$ is an Arf numerical semigroup with multiplicity  smaller than four, then the defining ideal of $R$ is generic, and it is not generic otherwise.  
					\end{corollary}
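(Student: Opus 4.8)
The plan is to reduce this corollary to the ring‑theoretic statements Theorems \ref{thm3} and \ref{thm4} by checking that genericity of the defining ideal is unaffected by passing from $\Bbbk[S]$ to its completion $R=\Bbbk[\![S]\!]$.

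First I would pin down what ``the defining ideal of $R$'' means. Write $\mathfrak{M}=(x_{1},\dots,x_{e})\subseteq \Bbbk[x_{1},\dots,x_{e}]$. Since $I_{S}$ is generated by binomials $\bold{x}^{\bold{a}}-\bold{x}^{\bold{b}}$ with $\deg_{S}\bold{a}=\deg_{S}\bold{b}$ and $\bold{a}\neq\bold{b}$, we have $I_{S}\subseteq\mathfrak{M}$. As $R$ is the $\mathfrak{n}$‑adic completion of $\Bbbk[S]_{\mathfrak{m}}=\bigl(\Bbbk[x_{1},\dots,x_{e}]/I_{S}\bigr)_{\mathfrak{M}}$ and completion commutes with passing to the quotient by an ideal contained in the maximal ideal, we get $R\cong \Bbbk[\![x_{1},\dots,x_{e}]\!]/\widehat{I_{S}}$, where $\widehat{I_{S}}=I_{S}\,\Bbbk[\![x_{1},\dots,x_{e}]\!]$ is the extension of $I_{S}$; this $\widehat{I_{S}}$ is the kernel of $\Bbbk[\![x_{1},\dots,x_{e}]\!]\to R$, $x_{i}\mapsto t^{n_{i}}$, i.e.\ the defining ideal of $R$.

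Next I would show genericity transfers in both directions. Let $\phi_{1},\dots,\phi_{r}$ be a minimal generating set of $I_{S}$ consisting of binomials. Because $\Bbbk[\![x_{1},\dots,x_{e}]\!]$ is faithfully flat over the local ring $\Bbbk[x_{1},\dots,x_{e}]_{\mathfrak{M}}$ with the same residue field $\Bbbk$, one has $\widehat{I_{S}}\big/\widehat{\mathfrak{M}}\,\widehat{I_{S}}\cong I_{S}/\mathfrak{M}I_{S}$, so by Nakayama the same binomials $\phi_{1},\dots,\phi_{r}$ form a \emph{minimal} generating set of $\widehat{I_{S}}$; the support of each $\phi_{i}$ is manifestly unchanged. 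Hence $I_{S}$ admits a minimal generating set of binomials with full support if and only if $\widehat{I_{S}}$ does: the defining ideal of $\Bbbk[S]$ is generic if and only if the defining ideal of $R$ is generic. (Equivalently, one may note that Eto's criterion Theorem \ref{eto1} refers only to $\pf(S)$ and the RF‑matrices of $S$, which are the same data whether one works with $\Bbbk[S]$ or $\Bbbk[\![S]\!]$.)

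Finally I would close the case analysis. The multiplicity of a numerical semigroup is a positive integer, so ``$\m(S)$ smaller than four'' and ``$\m(S)$ bigger than $3$'' are complementary and exhaustive. If $\m(S)\in\{1,2,3\}$, Theorem \ref{thm3} gives that $I_{S}$ is generic, hence by the previous paragraph the defining ideal of $R$ is generic; if $\m(S)\geq 4$, Theorem \ref{thm4} gives that $I_{S}$ is not generic, hence neither is the defining ideal of $R$. The one point requiring care is the passage through completion in the third paragraph: one must keep in mind that ``generic'' means the \emph{existence} of a minimal generating set of binomials with full support, and verify that both minimality and the supports of the chosen binomials are preserved under completion; granting that, the corollary is immediate.
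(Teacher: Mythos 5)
Your proposal is correct and follows essentially the same route as the paper, which simply derives this corollary from Theorems \ref{thm3} and \ref{thm4}. The only difference is that you make explicit the (paper-implicit) verification that genericity is unaffected by passing from $\Bbbk[S]$ to the completion $\Bbbk[\![S]\!]$, via faithful flatness and Nakayama; this is a worthwhile detail and is argued correctly.
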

					

					\section*{Acknowledgements} We thank Naoki Endo and Om Prakash Bhardwaj for reading the first drafts and their valuable comments. We also thank the anonymous reviewers for their careful reading and their many insightful comments and suggestions.

				\end{document}